\definecolor{gr}{rgb}   {0.,   0.69,   0.23 }
\definecolor{bl}{rgb}   {0.,   0.5,   1. }
\definecolor{mg}{rgb}   {0.85,  0.,    0.85}
\definecolor{yl}{rgb}   {0.8,  0.7,   0.}
\definecolor{or}{rgb}  {0.7,0.2,0.2}
\newtheorem{theorem}{Theorem} [section]
\newtheorem{lemma}[theorem]{Lemma}
\newtheorem{proposition}[theorem]{Proposition}
\newtheorem{remark}[theorem]{Remark}
\newtheorem{definition}[theorem]{Definition}
\DeclareMathOperator*{\supp}{supp}
\DeclareMathOperator{\tr}{tr}
\newcommand{\nn}{|\hspace{-0.4mm}|\hspace{-0.4mm}|}
\newcommand{\noi}{\noindent}
\newcommand{\Z}{\mathbb{Z}}
\newcommand{\R}{\mathbb{R}}
\newcommand{\C}{\mathbb{C}}
\newcommand{\T}{\mathbb{T}}
\let\Re=\undefined\DeclareMathOperator*{\Re}{Re}
\let\Im=\undefined\DeclareMathOperator*{\Im}{Im}
\newcommand{\M}{\mathcal{M}}
\newcommand{\NB}{\mathbb{N}}
\newcommand{\FL}{\mathcal{F}L} 
\renewcommand{\S}{\mathcal{S}}
\newcommand{\F}{\mathcal{F}}
\newcommand{\al}{\alpha}
\newcommand{\be}{\beta}
\newcommand{\dl}{\delta}
\newcommand{\eps}{\varepsilon}
\newcommand{\kk}{\kappa}
\newcommand{\ld}{\lambda}
\newcommand{\ft}{\widehat}
\newcommand{\cj}{\overline}
\newcommand{\dx}{\partial_x}
\newcommand{\dt}{\partial_t}
\newcommand{\dd}{\partial}
\newcommand{\ta}{\theta}
\renewcommand{\l}{\ell}
\newcommand{\les}{\lesssim}
\newcommand{\ges}{\gtrsim}
\newcommand{\jb}[1]
{\langle #1 \rangle}
\newcommand{\ind}{\mathbf 1}
\newcommand{\Gf}{\mathfrak{G}}
\numberwithin{equation}{section}
\numberwithin{theorem}{section}
\newcommand{\MH}{\textit MH}
\newcommand{\If}{\mathfrak{I}}
\newtheorem*{ackno}{Acknowledgements}
\newcommand{\ceil}[1]
{\lceil #1 \rceil}
\begin{document}

\baselineskip = 14pt

\title
[GWP of  the 1-$d$ cubic NLS in almost critical spaces]
{Global well-posedness of the one-dimensional cubic nonlinear Schr\"odinger equation
in almost critical spaces}

\author[T.~Oh and  Y.~Wang]
{Tadahiro Oh and Yuzhao Wang}

\address{
Tadahiro Oh, School of Mathematics\\
The University of Edinburgh\\
and The Maxwell Institute for the Mathematical Sciences\\
James Clerk Maxwell Building\\
The King's Buildings\\
Peter Guthrie Tait Road\\
Edinburgh\\ 
EH9 3FD\\
 United Kingdom}

\email{hiro.oh@ed.ac.uk}

\address{
Yuzhao Wang\\
School of Mathematics\\
Watson Building\\
University of Birmingham\\
Edgbaston\\
Birmingham\\
B15 2TT\\ United Kingdom}

\email{y.wang.14@bham.ac.uk}

\subjclass[2010]{35Q55}

\keywords{nonlinear Schr\"odinger equation;
modified KdV equation;   
global well-posedness; complete integrability; 
modulation space; Fourier-Lebesgue space}

\begin{abstract}
In this paper, we first introduce 
a new function space  $\MH^{\ta, p}$ 
whose norm is given by the $\l^p$-sum
of modulated $H^\ta$-norms of a given function.
In particular, when $\ta < -\frac 12$, 
 we show that the
space $\MH^{\ta, p}$ agrees with 
the modulation space
$M^{2, p}(\R)$ on the real line
and the Fourier-Lebesgue space $\FL^{p}(\T)$
on the circle.
We use this equivalence of the norms
and the Galilean symmetry to adapt
the conserved quantities constructed by Killip-Vi\c{s}an-Zhang
to the modulation space 
and Fourier-Lebesgue space
setting.
By applying the scaling symmetry, 
we then prove global well-posedness
of the one-dimensional 
cubic nonlinear Schr\"odinger equation (NLS)
in almost critical spaces.
More precisely, we show that the cubic NLS on $\R$
is globally well-posed in $M^{2, p}(\R)$
for any $p < \infty$, 
while 
the renormalized cubic NLS on $\T$
is globally well-posed in $\FL^p(\T)$
for any $p < \infty$.

In Appendix, 
we also 
establish analogous global-in-time bounds for 
 the modified KdV equation (mKdV)
in the modulation spaces on the real line and 
in the Fourier-Lebesgue spaces on the circle.
An additional key ingredient of the proof in this case is 
 a Galilean transform which  converts
the mKdV to the mKdV-NLS equation.

\end{abstract}

\maketitle

\tableofcontents

\section{Introduction}

\subsection{One-dimensional cubic nonlinear Schr\"odinger equation}
In this paper, we study the following Cauchy problem
of the one-dimensional cubic nonlinear Schr\"odinger equation (NLS) on $\M = \R$ or $\T = \R/(2\pi\Z)$:
\begin{align}
\begin{cases}
i \dt u = \dx^2 u \mp 2 |u|^2 u, \\
u|_{t= 0} = u_0.
\end{cases}
\label{NLS1}
\end{align}

\noi
The equation \eqref{NLS1}
 arises in various physical
situations for the description of wave propagation in nonlinear optics, fluids, and plasmas;
 see \cite{SULEM} for a general review.
 It is also known to be one of the simplest partial differential equations (PDEs) with complete integrability
 \cite{ZS, AKNS, A2, GK}.
Our main goal in this paper is to exploit the complete integrable structure
of the equation and prove global well-posedness of \eqref{NLS1} in almost critical spaces.

The Cauchy problem  \eqref{NLS1} has been studied extensively
by many mathematicians.
Tsutsumi \cite{Tsutsumi} and Bourgain \cite{BO1}
proved global well-posedness of \eqref{NLS1}
in $L^2(\M)$ with $\M = \R$ and $\T$, respectively.
Before going over the known  results for \eqref{NLS1} below $L^2(\M)$, 
let us first recall two important symmetries
that \eqref{NLS1} enjoys.
The scaling symmetry states that if $u(x, t)$ is a solution to \eqref{NLS1}
on $\M = \R$ with initial data $u_0$, 
then the $\ld$-scaled function 
\begin{align}
u(x, t) \longmapsto
u_\ld(x, t) = \ld^{-1} u (\ld^{-1}x, \ld^{-2}t)
\label{scaling}
\end{align}

\noi
is also a solution to \eqref{NLS1} with the $\ld$-scaled initial data 
$u_{0, \ld}(x) = \ld^{-1} u_0 (\ld^{-1}x)$.
Associated to this scaling symmetry, 
there is a  scaling-critical Sobolev regularity $s_\textup{crit}$
such that the homogeneous $\dot{H}^{s_\text{crit}}$-norm is invariant
under the scaling symmetry.
In the case of the one-dimensional cubic NLS \eqref{NLS1}, 
 the scaling-critical Sobolev regularity is $s_\text{crit} = -\frac{1}{2}$
 and it is known that \eqref{NLS1} is ill-posed
 in $H^s(\M)$ for $s \leq s_\text{crit} = -\frac 12$
 with  $\M = \R$ or $\T$
 in the sense of norm inflation 
 \cite{CCT2b, Kishimoto, OW, O1}; 
given any $\eps > 0$, 
there exist a solution $u$ to \eqref{NLS1} on $\M$
and $t_\eps  \in (0, \eps) $ such that 
\begin{align}
 \| u(0)\|_{H^s(\M)} < \eps \qquad \text{ and } \qquad \| u(t_\eps)\|_{H^s(\M)} > \eps^{-1}.
\label{NI}
 \end{align}

\noi
Note that 
this is a stronger notion
of ill-posedness than  the failure of continuity of the solution map at $u_0 \equiv 0$.
The other symmetry 
of importance here is 
the Galilean symmetry;  if $u(x, t)$ is a solution to \eqref{NLS1}
on $\R$ with initial condition $u_0$, then
\begin{align}
u^\beta(x, t) = \mathcal{G}_\be(u)(x, t) := e^{- i\be x}e^{i\be^2 t}  u (x- 2\beta t, t)
\label{Galilei}
\end{align}

\noi
is also a solution to \eqref{NLS1} with the modulated initial condition 
$u_0^\beta(x) = e^{-i\be x}  u_0 (x)$.
On the Fourier side, the Galilean symmetry is expressed as
\begin{align*}
\ft{u^\be}(\xi, t) = 
e^{- i\be^2 t}  e^{- 2 i \be  \xi t}\ft u (\xi + \be, t), 
\end{align*}

\noi
basically corresponding to a translation in frequencies.
In particular, we need to impose $\be \in   \Z$
on the circle $\M = \T$.
We point out that 
the Galilean symmetry 
induces another critical regularity $s_\text{crit}^\infty  =  0$
since it  preserves the   $L^2$-norm.
In fact, there is a dichotomy between the behavior of solutions to \eqref{NLS1}
in $L^2(\M)$ and in negative Sobolev spaces.
On the one hand, \eqref{NLS1} is known to be well-posed in $L^2(\M)$.
On the other hand, it is known to be mildly ill-posed
in negative Sobolev spaces
in the sense of the failure of local uniform continuity 
of the solution map:
 $\Phi(t): u_0\in H^s(\M) \mapsto u(t) \in H^s(\M)$;
 see \cite{KPV, BGT, CCT1}.
Moreover, on the circle, \eqref{NLS1} is known to be ill-posed
below $L^2(\T)$; see \cite{CCT2, MOLI, GO}.
In particular, in \cite{GO}, 
the first author (with Z.\,Guo) showed 
non-existence of solutions to \eqref{NLS1} on $\T$ with initial data outside $L^2(\T)$.
This last result was proved  by first establishing an existence result
for the following renormalized cubic NLS on $\T$:
\begin{align}
\begin{cases}
i \dt u = \dx^2 u \mp 2 ( |u|^2  - 2\fint_\T |u|^2 dx)u \\
u|_{t= 0} = u_0,
\end{cases}
\qquad (x, t) \in \T\times \R, 
\label{NLS2}
\end{align}

\noi
where
 $\fint f(x) dx := \frac{1}{2\pi} \int_\T f(x)  dx.$
 In fact, it is known that, 
 while it is equivalent to the standard cubic NLS \eqref{NLS1} within $L^2(\T)$, 
  this renormalized cubic NLS \eqref{NLS2} on $\T$
behaves better outside $L^2(\T)$
 and in fact share many common properties with the cubic NLS on $\R$
 outside $L^2$; see a survey paper \cite{OS}.
Note that 
there is 
  no ill-posedness result below $L^2(\M)$
for the cubic NLS \eqref{NLS1} on $\R$
or the renormalized cubic NLS \eqref{NLS2} on $\T$, contradicting either existence, uniqueness, or continuous dependence, 
and the well-posedness of \eqref{NLS1} on $\R$ 
and \eqref{NLS2} on $\T$ in negative Sobolev spaces 
(in particular uniqueness) has been a long-standing challenging open question in the field.
In \cite{CCT3, KT1, KT2}, 
Christ-Colliander-Tao and Koch-Tataru 
independently proved  existence (without uniqueness)
of solutions to  the cubic NLS \eqref{NLS1} on $\R$ in negative Sobolev spaces.
An analogous existence result for the renormalized cubic NLS \eqref{NLS2} on $\T$
was established in \cite{GO}.
More recently, 
Koch-Tataru \cite{KT3} and Killip-Vi\c{s}an-Zhang \cite{KVZ}
exploited the complete integrable structure of the equation
and proved global-in-time a priori bounds
on the $H^s$-norm of solutions
in the scaling-subcritical range: $s > -\frac 12$.
In the following, we combine the result in \cite{KVZ}
with the scaling and Galilean symmetries
to prove global well-posedness of the cubic NLS \eqref{NLS1} on $\R$
and the renormalized cubic NLS \eqref{NLS2} on $\T$
in almost critical spaces with respect to the scaling symmetry.

\subsection{Fourier-Lebesgue spaces
and modulation spaces}
In this subsection, we first recall the definitions of 
the Fourier-Lebesgue spaces
and the modulation spaces.
Then, we go over the known well-posedness results for the cubic NLS \eqref{NLS1} on $\R$
and the renormalized cubic NLS \eqref{NLS2} on $\T$ in these spaces.
Lastly, we introduce a new function space $\MH^{\ta, p}$ and 
show that this space coincides with 
 the modulation spaces on $\R$
and the Fourier-Lebesgue spaces on $\T$
in a certain regime (Lemma \ref{LEM:equiv}).
This equivalence of the norms will be a key ingredient 
for the proof of the main result (Theorem \ref{THM:1}).

Our conventions for the Fourier transform are as follows:
\[ \ft f(\xi) = \frac{1}{\sqrt{2\pi}} \int_\R f(x) e^{-ix \xi} dx
\qquad\text{and}\qquad
 f(x) = \frac{1}{\sqrt{2\pi}} \int_\R \ft f(\xi) e^{ix \xi} d\xi
\]

\noi
for functions on the real line $\R$
and 
\[ \ft f(\xi) = \frac{1}{\sqrt{2\pi}} \int_0^{2\pi} f(x) e^{-ix \xi} dx
\qquad\text{and}\qquad
 f(x) =\frac{1}{\sqrt{2\pi}} \sum_{\xi \in  \Z} \ft f(\xi) e^{ix \xi}
\]

\noi
for functions on the circle $\T$ (with $\xi \in \Z$).
Given $\M = \R$ or $\T$, 
let   $\ft \M$ denote the Pontryagin dual of $\M$, 
i.e.~
\begin{align*}
\ft \M = \begin{cases}
 \R & \text{if }\M = \R, \\
 \Z & \text{if } \M = \T.
\end{cases}
\end{align*}

\noi
When $\ft \M = \Z$, 
we endow it with the counting measure.
Given $s \in \R$ and $1 \leq p \leq \infty$, we define
the Fourier-Lebesgue space $\F L^{s, p}(\M)$  by the norm:
\begin{align*}
\|f \|_{\F L^{s, p}(\M)} = \| \jb{\xi}^s \ft f(\xi) \|_{L^{p}_\xi(\ft \M)}
\end{align*}

\noi
with the usual modification when $p = \infty$.
Here,  $\jb{\,\cdot\,} = (1 + |\cdot|^2)^\frac{1}{2}$.
When $s = 0$, we simply set $\FL^p(\M) = \FL^{0, p}(\M)$.
Note that we have $\FL^p (\T) \supset L^2(\T)$ on the circle for $p \geq 2$.

Next, we  recall  the definition of the modulation spaces $M^{r, p}_s(\R)$ on the real line; see \cite{FG1, FG2}. 
Let $\psi \in \S(\R)$ such that
\begin{align*}
\supp \psi \subset [-1, 1]
\qquad \text{and} \qquad \sum_{k \in \Z} \psi(\xi -k) \equiv 1.
\end{align*}

\noi
Then, the modulation space $M_s^{r, p}(\R)$ is defined
as the collection of all tempered distributions
$f\in\S'(\R)$ such that
$\|f\|_{M_s^{r, p}}<\infty$, where
the $M_s^{r, p}$-norm is defined by
\begin{equation} \label{mod}
\|f\|_{M_s^{r, p} (\R)} 
= \big\| \jb{n}^s\|\psi_n(D) f \|_{L_x^r(\R)} \big\|_{\l^p_n(\Z)}.
\end{equation}

\noi
Here, 
$\psi_n(D)$ is the Fourier multiplier operator
with the multiplier 
\begin{align}
\psi_n(\xi) := \psi(\xi - n).
\label{psi1}
\end{align}

\noi
When $s = 0$, we simply set 
$M^{r, p} (\R) =  M_0^{r, p} (\R)$.
In the following, we only consider $r = 2$.
In this case, we have
\begin{align*}
M^{2, p}(\R) \supset \FL^p(\R)
\end{align*}

\noi
for $p\geq 2$.

\begin{remark}\rm
The modulation spaces $M^{r, p}$ have an equivalent characterization via the short-time (or windowed) Fourier transform (STFT).
Given
a non-zero window function $\phi\in \S(\R)$,
we define  the STFT $V_\phi f$ of a tempered distribution $f\in \S'(\R)$ with respect to $\phi$ by
\[V_\phi f(x, \xi)=\frac{1}{\sqrt{2\pi}} \int_{\R}f(y)\cj{\phi(y-x)}e^{- iy \xi}\,dy.\]

\noi
Then, we have the equivalence of norms:
\begin{align}
\|f\|_{M^{r, p}}\sim_\phi 
\nn f \nn_{M^{r, p}}
:= \big\|\|V_\phi f
\|_{L_x^r} \big\|_{L^p_\xi},
\label{mod2}
\end{align}
where the implicit constants depend on the window function $\phi$.
In view of the definition \eqref{mod2} of the $\nn\cdot\nn_{M^{r, p}}$-norm, 
it may be tempting to consider this norm on $\T$.
It is, however, known that, for $1 \leq r, p \leq \infty$,  we have
\begin{align}
M^{r, p}(\T) = \FL^p(\T)
\label{mod5}
\end{align}

\noi
on the circle; see \cite{RSTT}.

\end{remark}

Let us now discuss critical regularities for \eqref{NLS1} and \eqref{NLS2}
in the context of the Fourier-Lebesgue spaces
and the modulation spaces.
A direct computation shows that 
the homogeneous Fourier-Lebesgue space $\dot \FL^{s, p}(\R)$
is invariant under
the scaling symmetry \eqref{scaling}
when $s = s_\text{crit}(p) = - \frac 1p$
with the understanding $s_\text{crit} (\infty) = 0$.
In particular, when $s = 0$, the cubic NLS \eqref{NLS1} on $\R$ is scaling-critical
in $\FL^\infty(\R)$.
While there is no scaling symmetry on the circle, 
we say that the renormalized cubic NLS \eqref{NLS2} on $\T$ is scaling-critical in $\FL^\infty(\T)$.
On the other hand, 
 the modulation spaces
 are based on the unit cube decomposition of the frequency space
 and thus there is no scaling for the modulation spaces.\footnote{See \cite{BOmod} for modulation spaces adapted to scaling.}
At the same time, 
a change of variables and interpolating the $r = 2$ and $r = \infty$ cases yield
 the following bound:
\begin{align}
\| f_\ld \|_{\dot M^{r, p}_s} \les \ld^{ - s - \frac 1p} \| f \|_{\dot M^{r, p}_s}
\label{scaling2}
\end{align}

\noi
for any $\ld \geq 1$, provided that $p \geq r'$ and $r \geq 2$.
See also \cite{ST} for a further discussion on the scaling properties
of the modulation spaces.
This shows that  
$s = s_\text{crit}(p) = - \frac 1p$ is (essentially) a scaling-critical regularity 
for \eqref{NLS1} in terms of the modulation spaces
$M^{r, p}_s(\R)$. 
In particular, when $s = 0$ 
the cubic NLS \eqref{NLS1} on $\R$ is (essentially) scaling-critical in $M^{2, \infty}(\R)$.
We point out that 
a typical function in these critical spaces $\FL^\infty(\M)$
and $M^{2, \infty}(\R)$ is the Dirac delta function
and that 
\eqref{NLS1} on $\R$
and \eqref{NLS2} on $\T$ are known to be ill-posed with the Dirac delta function as initial data;
see \cite{KPV, FO}.
See also  Banica-Vega \cite{BV1, BV2}  for the work on the cubic NLS \eqref{NLS1}
with the Dirac delta function
as initial data.

The Cauchy problems \eqref{NLS1} on $\R$ and \eqref{NLS2} on $\T$
have been studied in the context of the Fourier-Lebesgue spaces
and the modulation spaces.
In particular, local well-posedness in almost critical spaces have been known.
In \cite{GRUN}, Gr\"unrock studied the cubic NLS \eqref{NLS1} on $\R$ in the Fourier-Lebesgue spaces
and proved local well-posedness in $\FL^p(\R)$, $1 < p < \infty$, 
almost reaching the critical case $p = \infty$.
He also proved global well-posedness for $2 \leq p < \frac 5 2$.
We also mention a precursor to this result by Vargas-Vega \cite{VV}, 
establishing well-posedness of \eqref{NLS1} on $\R$
with infinite $L^2$-norm initial data.
In a recent paper, 
S.\,Guo~\cite{Guo} proved local well-posedness of the cubic NLS \eqref{NLS1} on $\R$
in $M^{2, p}(\R)$ for $2 \leq p < \infty$.
In the periodic setting, 
Gr\"unrock-Herr \cite{GH} proved 
 local well-posedness of the renormalized cubic NLS \eqref{NLS2} in $\FL^p(\T)$, $1 < p < \infty$.
See also Christ 
\cite{CH2} for a construction of solutions to \eqref{NLS2}
(without uniqueness) via a power series expansion.
In the same paper, Christ 
also refers to an unpublished work with Erdo\v{g}an,
claiming 
small data global well-posedness in $\FL^p(\T)$, $p < \infty$.
We point out that, as a consequence of the local well-posedness result in \cite{GH}, 
 the non-existence result 
 in \cite{GO} also applies 
 to  the cubic NLS \eqref{NLS1} in 
the Fourier-Lebesgue setting: $\FL^p(\T) \setminus L^2(\T)$, $p>2$.

While there are some global well-posedness results in the context of
the modulation and Fourier-Lebesgue spaces, 
it is very far from matching the local well-posedness regularities.
In the following, we close this gap and prove global well-posedness
in almost critical spaces.
For this purpose, we first introduce  the following modulated Sobolev
space $\MH^{\ta, p}(\R)$ by the norm:
\begin{align}
\|f\|_{\MH^{\ta, p}(\R)}
& = \bigg(\sum_{n \in \Z} \| M_n f\|_{H^\ta}^p \bigg)^\frac{1}{p}\notag\\
& = \bigg(\sum_{n \in \Z} \| \jb{\xi - n}^{\ta} \ft f(\xi)\|_{L^2_\xi}^p \bigg)^\frac{1}{p}, 
\label{mod4}
\end{align}

\noi
where $M_n$ denotes the modulation operator defined by 
\begin{align}
M_nf(x)  = e^{-inx}f(x).
\label{mod4a}
\end{align}

\noi
On the circle, we define $\MH^{\ta, p}(\T)$ in an analogous manner.
When $\ta\geq 0$, we have
$\|f \|_{\MH^{\ta, p}} < \infty$
if and only if $f = 0$.
Hence, we focus on $\ta < 0$ in the following.
In fact, when $\ta < -\frac 12$, it is easy to see that the $\MH^{\ta, p}$-norm 
is equivalent to the $M^{2, p}$-norm.	

\begin{lemma}\label{LEM:equiv}
\textup{(i)} Let $\ta < -\frac{1}{2}$ and $2\leq p \leq \infty$.
Then, we have
\[ \| f \|_{\MH^{\ta, p}} \sim \|f\|_{M^{2, p}}\]

\noi
with the understanding that $M^{2, p}(\T) = \FL^p(\T)$ on the circle.
\smallskip

\noi
\textup{(ii)}
Let $-\frac 12\leq \ta < 0$ and $2\leq q < p \leq \infty$.
Then, we have
\begin{align}
\|f\|_{M^{2,p}} \les \|f\|_{\MH^{\ta, p}}
& \les 
\begin{cases}
\|f\|_{M^{2,q}},\\
\|f\|_{M^{2,p}_s}, 
\end{cases}
\label{equiv2}
\end{align}

\noi
provided that $\frac{1}{q} > \frac{1}{p} + \frac 12 + \ta$
and $s > \frac12 + \ta$.

\end{lemma}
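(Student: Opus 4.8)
The plan is to reduce all four inequalities to a single family of discrete convolution estimates on the frequency lattice. Writing $Q_n = [n-\frac12, n+\frac12)$ and $a_n = \|\ft f\|_{L^2(Q_n)}$, the standard smooth-versus-sharp cutoff characterization of the modulation norm with $r=2$ gives $\|f\|_{M^{2,p}} \sim \|a\|_{\ell^p}$, $\|f\|_{M^{2,q}} \sim \|a\|_{\ell^q}$, and $\|f\|_{M^{2,p}_s} \sim \|\jb{\cdot}^s a\|_{\ell^p}$. For the $\MH$-norm I would slice the integral in \eqref{mod4} over the unit cells and use that $\jb{\xi-n}^{2\ta} \sim \jb{m-n}^{2\ta}$ for $\xi \in Q_m$, since the bracket weight is slowly varying; this yields $\|f\|_{\MH^{\ta,p}} \sim \|(w * a^2)^{1/2}\|_{\ell^p}$, where $w = (w_k)_k$ is the kernel $w_k = \jb{k}^{2\ta}$ and $a^2 = (a_n^2)$. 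On $\T$ the frequencies already live on $\Z$, so this identity is exact with $a_n = |\ft f(n)|$ and $\|a\|_{\ell^p} = \|f\|_{\FL^p}$; this is where \eqref{mod5} enters.

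With this reduction, the lower bound $\|f\|_{M^{2,p}} \les \|f\|_{\MH^{\ta,p}}$ (needed in both (i) and (ii)) is immediate: since $w_0 = 1$ we have $(w*a^2)_n \ge a_n^2$, hence $\|(w*a^2)^{1/2}\|_{\ell^p} \ge \|a\|_{\ell^p}$, using only $\ta < 0$. For the matching upper bound in (i), I would invoke Young's inequality in the form $\|w*a^2\|_{\ell^{p/2}} \le \|w\|_{\ell^1}\|a^2\|_{\ell^{p/2}}$, legitimate since $p/2 \ge 1$, and note that $\|w\|_{\ell^1} = \sum_k \jb{k}^{2\ta} < \infty$ precisely because $\ta < -\frac12$. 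Taking square roots yields $\|f\|_{\MH^{\ta,p}} \les \|f\|_{M^{2,p}}$ and closes (i), with the circle case identical.

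For the first upper bound in (ii), where $\ta \ge -\frac12$ makes $w \notin \ell^1$, I would run the same computation with the three-exponent Young inequality $\|w*a^2\|_{\ell^{p/2}} \le \|w\|_{\ell^r}\|a^2\|_{\ell^{q/2}}$, where $\frac1r = 1 - 2(\frac1q - \frac1p)$. Here $w \in \ell^r$ iff $2\ta r < -1$, i.e.\ iff $\frac1r < -2\ta$, and substituting the value of $\frac1r$ shows this is exactly the hypothesis $\frac1q > \frac1p + \frac12 + \ta$; after taking square roots this gives $\|f\|_{\MH^{\ta,p}} \les \|a\|_{\ell^q} \sim \|f\|_{M^{2,q}}$. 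The second upper bound I would then deduce from the first by composing with the elementary embedding $M^{2,p}_s \embeds M^{2,q}$, which by Hölder holds whenever $s > \frac1q - \frac1p$: it suffices to pick an intermediate exponent $q$ with $\frac1p + \frac12 + \ta < \frac1q < \frac1p + s$ and $2 \le q < p$, and such a $q$ exists as soon as $s > \frac12 + \ta$ and the first proviso admits an admissible $q$ at all (i.e.\ $\frac1p + \ta < 0$), which is the standing assumption of part (ii).

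The routine-but-delicate point, and the one I would be most careful about, is the discretization in the first paragraph: replacing the smooth multipliers $\psi_n$ by sharp indicators of $Q_n$, and $\jb{\xi-n}^{2\ta}$ by $\jb{m-n}^{2\ta}$ on each cell, must be justified by finite overlap and by the slow variation of the bracket weight, uniformly in $n$. Once the problem is discretized, the rest is bookkeeping of Young and Hölder exponents against the stated thresholds; the only genuine analytic input is the summability of $w$ in the appropriate $\ell^r$, which is dictated by the sign and size of $\ta$.
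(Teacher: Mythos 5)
Your proposal is correct and follows essentially the same route as the paper: slice the $\MH^{\ta,p}$-norm over unit frequency cells to identify it with $\|(w*a^2)^{1/2}\|_{\ell^p}$ for $w_k=\jb{k}^{2\ta}$ and $a_n=\|\ft f\|_{L^2(I_n)}$, then apply Young's inequality ($\ell^1*\ell^{p/2}$ for part (i), the three-exponent version for the $M^{2,q}$ bound in part (ii)) and Hölder for the weighted bound, with the thresholds $\ta<-\tfrac12$, $\tfrac1q>\tfrac1p+\tfrac12+\ta$, and $s>\tfrac12+\ta$ arising exactly as in the paper from the summability of $w$ in the relevant $\ell^r$. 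The only cosmetic difference is that you package the argument as a single discrete convolution estimate and obtain the lower bound from $w_0=1$ rather than from $\psi_n(\xi)\les\jb{\xi-n}^\ta$; these are the same observation.
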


The proof of this lemma is elementary and is presented in Section \ref{SEC:2}.
This equivalence of the norms for $\ta <  - \frac 12$ allows us
to express the relevant modulation norms on $\R$ and the Fourier-Lebesgue norms on $\T$
in terms of the $\l^p$-sum of the modulated Sobolev norms. 
On the one hand,  we introduce the $\MH^{\ta, p}$-norm
for a PDE purpose
and use it for $\ta = -1$.
On the other hand, when $-\frac 12 \leq \ta < 0$, 
it lies between $M^{2, p}$ and $M^{2, q}$ for 
$q$ satisfying  $\frac{1}{q} > \frac{1}{p} + \frac 12 + \ta$.
Hence, it may be of interest to study finer properties
of $\MH^{\ta, p}$.
One may also replace the weight $\jb{\xi - n}^\ta$
by a general weight function $\phi(\xi - n)$
and define the modulated Sobolev space $\MH^{\phi, p}$ adapted to the weight function $\phi$
via the norm:
\begin{align*}
\|f\|_{\MH^{\phi, p}(\R)}
& = \bigg(\sum_{n \in \Z} \| \phi(\xi - n) \ft f(\xi)\|_{L^2_\xi}^p \bigg)^\frac{1}{p}.
\end{align*}

\noi
Arguing as in the proof of Lemma \ref{LEM:equiv}, 
one can easily prove that 
\[ \| f \|_{\MH^{\phi, p}} \sim \|f\|_{M^{2, p}}\]

\noi
for $\phi \in L^2(\R)$ which is bounded away from 0  on $[-\frac 12, \frac 12]$.

\subsection{Main result}

We briefly go over the main result in the work \cite{KVZ} by Killip-Vi\c{s}an-Zhang.
See \cite{KVZ} for more details.
The one-dimensional cubic NLS \eqref{NLS1}
is a completely integrable PDE  and it admits the following Lax pair formulation \cite{ZS, AKNS}:
\[
\frac{d}{dt} L(t;\kk) = \big[P(t,\kk), L(t;\kk)\big], 
\]

\noi
where 
\[
L(t;\kk) 
= \begin{pmatrix}
-\dx+\kk & i u \\ \mp i\cj u & -\dx-\kk 
\end{pmatrix}
\]

\noi
and  $P(t;\kk)$ denotes some operator pencil
whose precise form does not play any role in the following. 
In \cite{KVZ}, the authors studied the following perturbation determinant $\al(\kk; u)$:
\begin{align}
\al(\kk; u) = \Re \sum_{j = 1}^\infty
\frac{(\mp 1)^{j-1}}{j} \tr
\Big\{ \big[(\kk - \dx)^{-\frac{1}{2}} u (\kk + \dx)^{-1} \cj u (\kk - \dx)^{-\frac{1}{2}}\big]^j\Big\}.
\label{X1}
\end{align}

\noi
Here, the operators $(\kk\pm\dx)^{-1}$ and $(\kk\pm\dx)^{-\frac12}$  
are defined as the Fourier multiplier operators.
For an operator $A$ on $L^2(\M)$ with a continuous  integral kernel $K(x,y)$, 
we  define its trace by
\[\tr(A)=\int_{\M} K(x,x) dx.\]

\noi
In particular, if $A$ is a Hilbert-Schmidt operator with an integral kernel $K(x,y)\in L^2(\M^2)$, 
then we have 
\[\tr (A^2) = \iint_{\M^2} K(x,y) K(y,x) dxdy.\]

\noi
We also set
\begin{align*}
\| A \|_{ \If_2}^2 =  \tr(A^* A) = \iint_{\M^2} | K(x,y)|^2 dxdy.
\end{align*}

\noi
Recall from   \cite[Lemma 1.4]{KVZ} that
\begin{align}
|\tr(A_1 \cdots A_k)| \leq \prod_{j = 1}^k \|A_j\|_{\If_2}.
\label{X1x}
\end{align}

In the following, we summarize three important properties of $\al(\kk; u)$.
Here, we only state the real line case.
For the periodic case, the corresponding statements are
basically true with a small change in 
\eqref{X2} for the leading term in the series \eqref{X1};
see Lemma \ref{LEM:main1} below.

\smallskip

\begin{lemma} \label{LEM:KVZ}
The following statements hold:
\begin{itemize}
\item[(i)] 
\cite[Proposition 4.3]{KVZ}:
For a Schwartz class  solution $u$ to \eqref{NLS1}, 
the quantity $\al(\kk; u)$ is conserved,
provided that $\kk>0$ is sufficiently large such that 
\begin{align}
\int_\R  \log\big(4 + \tfrac{\xi^2}{\kk^2}\big)\frac{|\ft u(\xi)|^2}{\sqrt{4 \kk^2 + \xi^2 }} d\xi \le c_0
 \label{X1a}
\end{align}

\noi
for some absolute constant $c_0 > 0$.

\medskip

\item[(ii)] \cite[Lemma 4.2]{KVZ}:
The leading term of the series expansion \eqref{X1} is given by 
\begin{align}
\Re \tr \big\{(\kk - \dx)^{-1} u (\kk + \dx)^{-1} \cj u 
\big\} 
= \int_{\R} \frac{2\kk |\ft u(\xi)|^2}{4 \kk^2 + \xi^2 } d\xi
\label{X2}
\end{align}

\noi
for any  $\kk > 0$ and  $u \in \S(\R)$.

\medskip

\item[(iii)]
\cite[Lemma 4.1]{KVZ}:
We have
\begin{align}
\big\|
(\kk - \dd_x)^{-\frac{1}{2}} u (\kk + \dd_x)^{-\frac 12}\big\|_{\mathfrak{I}_2(\R)}^2
\sim 
\int_{\R} \log\big(4 + \tfrac{\xi^2}{\kk^2}\big)\frac{|\ft u(\xi)|^2}{\sqrt{4 \kk^2 + \xi^2 }} d\xi
\label{X3}
\end{align}

\noi
for any  $\kk > 0$ and  $u \in \S(\R)$.

\end{itemize}

\end{lemma}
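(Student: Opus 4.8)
The three assertions in Lemma \ref{LEM:KVZ} are of two different natures: parts (ii) and (iii) are explicit identities and estimates that I would establish by a direct computation on the Fourier side, whereas part (i) is a dynamical statement whose proof rests on the Lax-pair structure. I treat them in that order.

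For part (ii), I would compute the trace by passing to frequencies. Writing $(\kk\pm\dx)^{-1}$ as the Fourier multiplier with symbol $(\kk\mp i\xi)^{-1}$ and realizing multiplication by $u$ (respectively $\cj u$) as convolution by $\ft u$ (respectively by $\ft{\cj u}(\eta)=\cj{\ft u(-\eta)}$), the operator $(\kk-\dx)^{-1}u(\kk+\dx)^{-1}\cj u$ acquires an explicit integral kernel in the frequency variables. Integrating this kernel along the diagonal to form the trace and using $\ft{\cj u}(\xi_2-\xi_1)=\cj{\ft u(\xi_1-\xi_2)}$ collapses the expression to
\[
\tr\big\{(\kk-\dx)^{-1}u(\kk+\dx)^{-1}\cj u\big\}
=\frac{1}{2\pi}\iint \frac{|\ft u(\xi_1-\xi_2)|^2}{(\kk-i\xi_1)(\kk+i\xi_2)}\,d\xi_1\,d\xi_2.
\]
After the change of variables $\zeta=\xi_1-\xi_2$, the remaining one-variable integral is a rational-function integral that I would evaluate by residues: the integrand decays like $|\,\cdot\,|^{-2}$ and has a single pole in the upper half plane, giving $\int_\R\frac{|\ft u(\zeta)|^2}{2\kk-i\zeta}\,d\zeta$. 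Taking the real part of $\frac{1}{2\kk-i\zeta}=\frac{2\kk+i\zeta}{4\kk^2+\zeta^2}$ yields exactly \eqref{X2}.

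For part (iii), the Hilbert-Schmidt norm of an operator equals the $L^2$-norm of its kernel, and by Plancherel in both variables this equals the $L^2$-norm of the frequency-side kernel. Since $(\kk-\dx)^{-\frac12}u(\kk+\dx)^{-\frac12}$ has frequency kernel proportional to $(\kk-i\xi_1)^{-\frac12}\,\ft u(\xi_1-\xi_2)\,(\kk+i\xi_2)^{-\frac12}$, one obtains
\[
\big\|(\kk-\dx)^{-\frac12}u(\kk+\dx)^{-\frac12}\big\|_{\If_2}^2
\sim\iint\frac{|\ft u(\xi_1-\xi_2)|^2}{\sqrt{(\kk^2+\xi_1^2)(\kk^2+\xi_2^2)}}\,d\xi_1\,d\xi_2.
\]
Setting $\zeta=\xi_1-\xi_2$ and integrating out the other variable reduces \eqref{X3} to the scalar bound $\int_\R\frac{d\eta}{\sqrt{(\kk^2+(\eta+\zeta)^2)(\kk^2+\eta^2)}}\sim\frac{\log(4+\zeta^2/\kk^2)}{\sqrt{4\kk^2+\zeta^2}}$. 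The rescaling $\eta=\kk s$, $\zeta=\kk z$ turns this into the dimensionless claim $\int_\R\frac{ds}{\sqrt{(1+(s+z)^2)(1+s^2)}}\sim\frac{\log(4+z^2)}{\sqrt{4+z^2}}$, which I would verify by splitting the $s$-integral into the two regions where one factor is comparable to $1$ (near $s=0$ and near $s=-z$, each contributing the logarithm) together with the complementary far region; these are routine one-dimensional estimates.

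Part (i) is where the substance lies, and I expect it to be the main obstacle. Once \eqref{X1a} holds with a sufficiently small constant $c_0$, part (iii) forces $\|(\kk-\dx)^{-\frac12}u(\kk+\dx)^{-\frac12}\|_{\If_2}$ to be strictly below $1$; writing the bracketed operator in \eqref{X1} as a product of two such Hilbert-Schmidt factors and invoking \eqref{X1x} then bounds the $j$-th term of the series by $\|(\kk-\dx)^{-\frac12}u(\kk+\dx)^{-\frac12}\|_{\If_2}^{2j}$, so the series converges absolutely and $\al(\kk;u)$ is well defined. I would then identify $\al(\kk;u)$ as the real part of the logarithm of a regularized perturbation determinant of the Lax operator $L(t;\kk)$, the series \eqref{X1} being precisely the standard expansion of $\tr\log$ of this Fredholm determinant. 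Conservation follows from isospectrality: the Lax equation $\frac{d}{dt}L=[P,L]$ makes $L(t;\kk)$ unitarily conjugate to $L(0;\kk)$, so its perturbation determinant, and hence $\al$, is time-independent. The genuinely delicate point is to make this rigorous for Schwartz solutions, namely to justify differentiating the infinite series of traces term by term in $t$, to carry out the resulting telescoping cancellation using the equation for $u$ and the explicit form of the pencil $P$, and to control all interchanges uniformly via the smallness supplied by \eqref{X1a}. This is exactly the content of \cite[Proposition 4.3]{KVZ}, whose scheme I would follow.
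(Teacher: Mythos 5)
The paper gives no proof of this lemma at all --- it is imported verbatim from \cite{KVZ} (Proposition 4.3, Lemmas 4.2 and 4.1) --- but your sketch is correct and follows precisely the route the paper itself takes when it reproves the periodic analogues in Lemmas \ref{LEM:error} and \ref{LEM:main1} (explicit frequency-side kernel, Plancherel, the residue/contour evaluation of the leading term, and the two-region estimate reducing \eqref{X3} to the scalar logarithmic integral), while for part (i) you, like the paper, ultimately defer the justification of term-by-term differentiation to \cite[Proposition 4.3]{KVZ}. One cosmetic slip: with the paper's Fourier convention the multiplier of $(\kk\pm\dx)^{-1}$ is $(\kk\pm i\xi)^{-1}$, not $(\kk\mp i\xi)^{-1}$, though your displayed double integral already uses the correct kernel, so nothing downstream is affected.
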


In view of \eqref{X1x} and \eqref{X3},
this smallness condition \eqref{X1a} guarantees
term-by-term differentiation of the series \eqref{X1}.
Using the properties  (i) - (iii),
Killip-Vi\c{s}an-Zhang \cite{KVZ}
proved the following global-in-time bound (Theorem 1.3 in \cite{KVZ})
on the $H^s$-norm of smooth solutions to \eqref{NLS1} on $\M = \R$ or $\T$:
\begin{align}
 \| u(t) \|_{H^s} 
\les \|u_0\|_{H^s}\Big(1 + \|u_0\|_{H^s}\Big)^\frac{|s|}{1-|s|}
\label{bound1}
\end{align}

\noi
for $ - \frac 12 < s < 0$.
The main idea of the argument in \cite{KVZ}
is to express the $H^s$-norms
(and in fact the Besov norms)
as a suitable sum
of the right-hand side of \eqref{X2}
as $\kk$ ranges over dyadic numbers $\kk_0 \cdot 2^\NB$
(for some $\kk_0 >0$).
See Lemma 3.2
and the $Z_{\kk_0}$-norm in the proof of Theorem 4.5
in \cite{KVZ}.
The property (iii) above is then to control the error
terms (i.e.~$j \geq 2$) in \eqref{X1},
which imposes the restriction $s>-\frac 12$.
Note that the restriction 
$s>-\frac 12$ is necessary in view of the norm inflation 
at the critical regularity $s = - \frac 12$ \cite{Kishimoto, OW, O1}.

We point out that the global-in-time bound \eqref{bound1} also holds
for smooth solutions $u$ to the renormalized cubic NLS \eqref{NLS2} on $\T$
since we can convert smooth solutions
to the cubic NLS \eqref{NLS1}  
and to the renormalized cubic NLS \eqref{NLS2}
by the following invertible gauge transform:
\begin{align}
\mathcal{J}(u)(t) : = e^{\mp 4 i t \fint |u(t)|^2 dx } u(t), 
\label{gauge}
\end{align}

\noi
while the gauge transform $\mathcal{J}$ preserves the $H^s$-norm.
As mentioned above, 
 uniqueness of solutions to
 the cubic NLS \eqref{NLS1} on $\R$
and the renormalized cubic NLS \eqref{NLS2} on $\T$
in negative Sobolev spaces 
remains as a very challenging open question.
Hence, 
while the global-in-time bound \eqref{bound1} may be used to prove
global existence of solutions (without uniqueness)
in negative Sobolev spaces, 
it does not provide global well-posedness at this point.

In the following, we establish global-in-time bound
on the $M^{2, p}$-norm
of  smooth solutions 
to the cubic NLS \eqref{NLS1} on $\R$
and the $\FL^p$-norm of
smooth solutions 
to  the renormalized cubic NLS \eqref{NLS2} on $\T$.
Then, the local well-posedness
in these spaces \cite{Guo, GH}
yields the following global well-posedness result.

\begin{theorem}\label{THM:1}
Let $2 \leq p < \infty$. 

\begin{itemize}
\item[\textup{(i)}] 
There exists $C= C(p) >0$ such that 
\begin{align}
\|u(t)\|_{M^{2, p}(\R)}\le C (1+\|u(0)\|_{M^{2, p}(\R)})^{\frac p2 - 1}\|u(0)\|_{M^{2, p}(\R)}
\label{bd1}
\end{align}

\noi
for any Schwartz class  solution $u$ to \eqref{NLS1} on $\R$
and any $t \in \R$.
In particular, 
the cubic NLS \eqref{NLS1} on $\R$ is globally well-posed
in $M^{2, p}(\R)$.

\smallskip

\item[\textup{(ii)}] 
There exists $C= C(p) >0$ such that 
\begin{align}
\|u(t)\|_{\FL^p(\T)}\le C \big(1+\|u(0)\|_{\FL^p(\T)}\big)^{\frac p2 - 1} \|u(0)\|_{\FL^p(\T)}
\label{bd2}
\end{align}

\noi
for any smooth solution $u$ to \eqref{NLS1} on $\T$
and any $t \in \R$.
In particular, 
the renormalized cubic NLS \eqref{NLS2} on $\T$ is globally well-posed
in $\FL^p(\T)$.

\end{itemize}

\end{theorem}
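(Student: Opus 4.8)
The plan is to express the modulation and Fourier-Lebesgue norms through a single Killip-Vi\c{s}an-Zhang functional, organized across frequencies by the Galilean symmetry, and then to exploit the scaling symmetry to pass from small to large data. First I would apply Lemma \ref{LEM:equiv}(i) with $\ta = -1$ to replace the $M^{2,p}(\R)$-norm (and, using \eqref{mod5}, the $\FL^p(\T)$-norm) by the equivalent $\MH^{-1,p}$-norm, so that it suffices to bound $\sum_n \|M_n u(t)\|_{H^{-1}}^p$. The key algebraic observation is that each building block is exactly a Galilean-shifted leading term: taking $\kk = \tfrac12$ in \eqref{X2} gives $\int_\R \frac{2\kk|\ft w(\xi)|^2}{4\kk^2 + \xi^2}\,d\xi = \|w\|_{H^{-1}}^2$, while the Fourier transform of $\mathcal{G}_n(u)$ differs from that of $M_n u$ only by unimodular factors (see the display following \eqref{Galilei}), so that $\|M_n u(t)\|_{H^{-1}} = \|u^n(t)\|_{H^{-1}}$ with $u^n := \mathcal{G}_n(u)$ solving \eqref{NLS1} with data $M_n u_0$. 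Thus $\|u(t)\|_{\MH^{-1,p}}^p = \sum_n \big(\text{leading term of }\al(\tfrac12; u^n(t))\big)^{p/2}$, and the problem reduces to propagating each $\al(\tfrac12; u^n)$ in time.

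The heart of the matter is a clean small-data estimate. When $\|u_0\|_{M^{2,p}}$ is small, I would first check that the smallness \eqref{X1a} holds simultaneously for all the shifted solutions $u^n$ at $\kk = \tfrac12$: by \eqref{X3} the relevant quantity is $\int_\R \frac{\log(4 + 4(\xi-n)^2)}{\sqrt{1+(\xi-n)^2}}\,|\ft u(\xi)|^2\,d\xi$, and a direct H\"older estimate in the unit-block decomposition (using that $\jb{\,\cdot\,}^{-1}\log\jb{\,\cdot\,} \in \l^{(p/2)'}$ for $p < \infty$) bounds it by $\les_p \|u_0\|_{M^{2,p}}^2$, uniformly in $n$. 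Hence Lemma \ref{LEM:KVZ}(i) applies and each $\al(\tfrac12; u^n)$ is conserved, while Lemma \ref{LEM:KVZ}(iii) together with \eqref{X1x} controls the error terms $j \ge 2$ in \eqref{X1}. Combining these with the leading-term identity of the previous paragraph gives $\|M_n u(t)\|_{H^{-1}} \sim \|M_n u_0\|_{H^{-1}}$ and, after the $\l^p_n$-sum, the linear small-data bound $\|u(t)\|_{M^{2,p}} \les \|u_0\|_{M^{2,p}}$.

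For general data I would invoke the scaling symmetry \eqref{scaling}. Choosing $\ld \sim (1 + \|u_0\|_{M^{2,p}})^p$ and using \eqref{scaling2} (with $s = 0$, $r = 2$), the rescaled datum $u_{0,\ld}$ satisfies $\|u_{0,\ld}\|_{M^{2,p}} \les \ld^{-1/p}\|u_0\|_{M^{2,p}} \le \eps_0$, so the small-data bound applies to $u_\ld$ and yields $\|u_\ld(\tau)\|_{M^{2,p}} \les \ld^{-1/p}\|u_0\|_{M^{2,p}}$ for all $\tau \in \R$. Undoing the scaling (which also rescales time) and using the companion bound $\|g_{1/\ld}\|_{M^{2,p}} \le \ld^{1/2}\|g\|_{M^{2,p}}$ for the reverse, concentrating scaling, I obtain $\|u(t)\|_{M^{2,p}} \les \ld^{\frac12 - \frac1p}\|u_0\|_{M^{2,p}}$, which with $\ld \sim (1+\|u_0\|_{M^{2,p}})^p$ is precisely the factor $(1+\|u_0\|_{M^{2,p}})^{\frac p2 - 1}$ in \eqref{bd1}; combined with the local theory of \cite{Guo} this proves part (i). For the circle the reduction of the first paragraph and the small-data estimate are identical (with $\be = n \in \Z$, which matches the modulation lattice, and \eqref{mod5}), but scaling is unavailable; instead I would run the conserved-quantity analysis at a single large parameter $\kk_0 \sim (1 + \|u_0\|_{\FL^p})^p$ dictated by \eqref{X1a}, controlling the low modes $|n| \les \kk_0$ by the single conserved quantity $\al(\kk_0; \cdot)$ (where passing from its $\l^2$-type control to the $\FL^p$-norm over the $\sim \kk_0$ low modes produces the factor $(1 + \|u_0\|_{\FL^p})^{\frac p2 - 1}$) and reconstructing the high modes from the Galilean-shifted family $\{\al(\kk; u^n)\}_{\kk \ge \kk_0}$ as in \cite{KVZ} (using the periodic statements of Lemma \ref{LEM:main1}), together with the gauge \eqref{gauge} relating \eqref{NLS1} and \eqref{NLS2}; this gives \eqref{bd2} and hence part (ii) via \cite{GH}.

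The step I expect to be the main obstacle is the uniform control and $\l^{p/2}_n$-summability of the error terms $j \ge 2$. Unlike \eqref{bound1}, which cannot be used as a black box here since $\ta = -1$ lies below the threshold $s = -\tfrac12$, the conserved-quantity argument must be run block by block, and one has to show that the errors coming from \eqref{X1x}--\eqref{X3} are not merely bounded but summable in $n$ and dominated by the $M^{2,p}$-norm itself rather than by a stronger norm; this is exactly where the subcriticality $p < \infty$ enters. On the circle the additional difficulty is the absence of scaling, which forces the direct large-$\kk_0$ reconstruction of the high modes in the $\FL^p$ (rather than $\l^2$-based Sobolev) setting, and tracking the $\l^2 \leftrightarrow \l^p$ losses through this reconstruction is what pins down the sharp exponent $\frac p2 - 1$.
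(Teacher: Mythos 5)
Your treatment of part (i) is essentially the paper's proof: the reduction via Lemma \ref{LEM:equiv} with $\ta=-1$, the identification of $\|M_n u\|_{H^{-1}}^2$ with the leading term of $\al(\tfrac12;\mathcal{G}_n(u))$, the uniform-in-$n$ verification of \eqref{X1a} by H\"older over unit blocks, the $\l^{p/2}_n$-summation of the $j\ge 2$ errors by Young's inequality, and the rescaling with $\ld\sim(1+\|u_0\|_{M^{2,p}})^p$ producing the exponent $\tfrac p2-1$ all match Proposition \ref{PROP:main1}. The one step you elide is that the error bound at time $t$ is $\les\|u(t)\|_{M^{2,p}}^4$, not $\|u(0)\|_{M^{2,p}}^4$, so conservation of $\al(\tfrac12;u_n)$ alone does not close the estimate; the paper runs a continuity/bootstrap argument on a short interval $I$ (choosing $\eps$ small so that $X\le C_0\eps^2+C_0X^2$ forces $X\les\eps^2$) and only then extends to all $t\in\R$. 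This is easily repaired but should be said.

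For part (ii) your route genuinely diverges from the paper's, and this is where the gap lies. Your premise that ``scaling is unavailable'' on the circle is not how the paper proceeds: it applies \eqref{scaling} anyway, accepting that the domain becomes the dilated torus $\T_\ld$ with $\ld\in\NB$, $\ld\sim(1+\|u_0\|_{\FL^p})^p$. It then works in $M^{2,p}(\T_\ld)$ (which for $\ld\gg1$ is \emph{not} $\FL^p(\T_\ld)$), re-derives the Killip--Vi\c{s}an--Zhang identities on $\T_\ld$ (Lemmas \ref{LEM:error}--\ref{LEM:conserved2}, including the periodization constant $C_\ld=\frac{1+e^{-2\pi\ld\kk}}{1-e^{-2\pi\ld\kk}}$ in Lemma \ref{LEM:main1}), reruns the small-data argument verbatim with Galilean shifts $n\in\Z\subset\Z_\ld$, and converts back via \eqref{Z3}--\eqref{Z4}, which is exactly where $\ld^{\frac12-\frac1p}\sim(1+\|u_0\|_{\FL^p})^{\frac p2-1}$ comes from. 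Your substitute --- a single large parameter $\kk_0$ for the low modes plus a reconstruction of the high modes from the dyadic family $\{\al(\kk;u^n)\}_{\kk\ge\kk_0}$ ``as in \cite{KVZ}'' --- is only gestured at, and the naive version loses a full extra power: conservation of $\al(\kk_0;u^n)$ at fixed $\kk_0$ controls $\|\ft u(t)\|_{L^2(I_n)}^2$ only up to a convolution kernel of $\l^1$-mass $\sim\kk_0$, so Young's inequality returns $\|u(t)\|_{\FL^p}\les\kk_0^{1/2}\|u_0\|_{\FL^p}\sim(1+\|u_0\|)^{p/2}\|u_0\|_{\FL^p}$ rather than the exponent $\tfrac p2-1$. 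Recovering the sharp power from a dyadic-in-$\kk$ reconstruction of an $\l^p$-sum over unit frequency blocks is precisely the delicate bookkeeping you flag at the end but do not carry out; the paper's dilated-torus device avoids it entirely by reducing (ii) to the same small-data computation as (i).
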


Theorem \ref{THM:1} establishes global well-posedness
of the cubic NLS \eqref{NLS1} on $\R$ and the renormalized cubic NLS \eqref{NLS2}on $\T$
in almost critical spaces, 
improving significantly the known global well-posedness results \cite{VV, GRUN, CH2}.
Moreover, the range of $p< \infty$ of Theorem~\ref{THM:1} 
is sharp in view of the ill-posedness results
with the Dirac delta as initial data which lies in $M^{2, \infty}(\R)$ and $\FL^\infty(\T)$.\footnote{In \cite{FO}, the ill-posedness result with the Dirac initial data on $\T$ was shown in a topology weaker than $\FL^\infty(\T)$.
We also point out that  $\FL^\infty(\T)$ does not admit smooth approximations
and hence  an a priori bound on the $\FL^{\infty}(\T)$-norm
for  smooth solutions would not yield the same bound for rough solutions.}
See also Remark \ref{REM:G} below.
In view of the local well-posedness \cite{Guo, GH}, 
it suffices to  establish a priori global-in-time bounds
\eqref{bd1} on $\R$ and \eqref{bd2} on $\T$, 
controlling 
the $M^{2, p}$-norms\footnote{In view of \eqref{mod5} on the circle, 
we  may  use $M^{2, p}(\T)$ for $\FL^p(\T)$ in the following.} of smooth solutions 
 to the cubic NLS \eqref{NLS1} on $\M = \R$ or $\T$.
In the periodic setting, 
we then obtain the same global-in-time bound \eqref{bd2}
for smooth solutions to the renormalized cubic NLS \eqref{NLS2}
via the gauge transform \eqref{gauge}, 
yielding global well-posedness for the renormalized cubic NLS \eqref{NLS2} on $\T$.
The main idea for proving Theorem \ref{THM:1}
is to use 
the equivalence 
of the norms for the modulation spaces $M^{2, p}$
and the modulated Sobolev spaces $\MH^{-1, p}$
(Lemma \ref{LEM:equiv}).
We then apply the result in \cite{KVZ}
to control the growth of the $\MH^{-1, p}$-norm.
Here, both the scaling and Galilean symmetries
play an important role. 
It follows from \eqref{mod4}, \eqref{X1}, and \eqref{X2}
that 
(the square of) the $\MH^{-1, p}$-norm of a solution $u$
is given by  the $\l^\frac{p}{2}$-sum of the leading terms 
for $\al(\frac 12; \mathcal{G}_n(u))$, $n \in \Z$, 
where $\mathcal{G}_n$ is the Galilean transform defined  in \eqref{Galilei}.
On the one hand, 
the Galilean symmetry and Lemma \ref{LEM:KVZ}
imply the conservation of  $\al(\frac 12; \mathcal{G}_n(u))$
in the small data case.
On the other hand, the scaling symmetry with the subcriticality 
of the underlying space $M^{2, p}$, $p< \infty$, allows us to reduce the situation to the small data case
and handle the error terms in the series \eqref{X1}. 
Compare this with \cite{KVZ}, 
where the main idea in this step is to express the $H^s$-norm
as a suitable sum of the leading terms of $\al(\kk; u)$
as $\kk$ ranges over dyadic numbers $\kk_0 \cdot 2^\NB$
(for some $\kk_0 \gg 1$).
Here, taking $\kk_0\gg 1$ essentially has an effect of scaling, 
reducing to the small data case (in the subcritical regularity $s > -\frac 12$).
Lastly, we  note that, 
as in \cite{KVZ}, 
we can control the error terms only in the subcritical range,
i.e.~$p < \infty$ in our setting.

\begin{remark}\label{REM:mKdV bound}
\rm

(i) 
In Appendix \ref{SEC:mKdV}, 
we consider the following complex-valued modified KdV equation on $\M = \R$ or $\T$:
\begin{align*}
 \dt u = - \dx^3 u \pm 6 |u|^2 \dx u
\end{align*}

\noi
and briefly discuss how to derive the same global-in-time bounds \eqref{bd1}
and \eqref{bd2} for Schwartz/smooth solutions to the mKdV.
See Theorem \ref{THM:2}.

\smallskip

\noi
(ii) The global-in-time bounds \eqref{bd1} and \eqref{bd2}
can be extended to 
the modulation spaces  $M^{2, p}_s(\R)$ and 
the Fourier-Lebesgue spaces $\FL^{s, p}(\T)$
of higher regularities.
See Appendix \ref{SEC:B}.

\end{remark}

\begin{remark} \label{REM:G} \rm
In \cite{Guo}, S.\,Guo  proved local well-posedness of the cubic NLS \eqref{NLS1}
on $\R$ in a space whose norm  is logarithmically stronger than the critical $M^{2, \infty}(\R)$-norm.
The space is characterized by an Orlicz norm and contains functions whose
Fourier transforms decay only logarithmically, i.e.~not belonging to $M^{2, p}(\R)$ for any finite $p< \infty.$
It seems of interest to study the global-in-time behavior of solutions 
in this logarithmically subcritical space.
We also remark that, in   \cite{KVZ}, Killip-Vi\c{s}an-Zhang
also established global-in-time bounds for solutions to the cubic NLS \eqref{NLS1}
on $\R$ and $\T$ in negative Besov-type spaces which are logarithmically stronger
than the critical $H^{-\frac{1}{2}}$, where the norm inflation \eqref{NI} is known.

\end{remark}

\begin{remark} \rm
In \cite{CO}, the first author (with Colliander) studied the renormalized cubic NLS \eqref{NLS2}
on $\T$
with random initial data of the form:
\begin{align}
u_0^\al (x) = \frac{1}{\sqrt {2\pi}} \sum_{n \in \Z} \frac{g_n}{\jb{n}^\al} e^{inx},
\label{random1}
\end{align}

\noi
where $\{g_n \}_{n \in \Z}$ is a sequence of independent standard complex-valued
Gaussian random variables.
It is easy to see that such $u_0^\al$ almost surely  belongs
to $H^{\al - \frac 12 - \eps}(\T)\setminus
H^{\al - \frac 12}(\T)$.
When $\al = 0$, this corresponds to the white noise on $\T$
and is of significant importance to study  \eqref{NLS2}
with the white noise initial data.
It is also easy to see that $u_0^\al$ in \eqref{random1}
almost surely belongs to $\FL^p(\T)$ for $p > \al^{-1}$.
Therefore, Theorem \ref{THM:1} (ii) 
yields (a deterministic proof of) almost sure global well-posedness of \eqref{NLS2}
with almost white noise initial data $u_0^\al$, $\al > 0$.
The $\al = 0$ case remains as an important open problem.

\end{remark}

\section{Equivalence of the norms}
\label{SEC:2}

In this section, we present a proof of Lemma \ref{LEM:equiv}.
In the following, we only consider the real line case since the proof for 
the periodic case follows in a similar manner.
Obviously,   we have
\[
\|f\|_{M^{2,p}} \les \|f\|_{\MH^{\ta, p}}
\]
since $\psi_n(\xi)  \les \jb{\xi-n}^\ta$,
where $\psi_n$ is as in \eqref{psi1}.

Let $I_k = [k - \frac 12, k + \frac12)$, $k \in \Z$.
By writing
\begin{align*}
 \|f\|_{\MH^{\ta, p}} 
 & = \Big\| \|\jb{\xi-n}^{\ta} \ft f (\xi) \|_{L^2}\Big\|_{\l^p_n}\\
 & \sim \bigg\|  \sum_{k\in\Z} \jb{k-n}^{2\ta} \int_{I_k}  | \ft f (\xi)|^2  d \xi \bigg\|_{\l^\frac{p}{2}_n}^\frac{1}{2}, 
\end{align*}

\noi
it follows from Young's inequality with $p \geq 2$ that 
\begin{align*}
\|f\|_{\MH^{\ta, p}} 
& \les \bigg( \sum_{n\in \Z} \jb{n}^{2\ta} \bigg)^\frac{1}{2} 
\bigg\| \int_{I_n}  | \ft f (\xi)|^2  d \xi \bigg\|_{\l^\frac{p}{2}_n}^\frac{1}{2}\\
& \les \|f\|_{M^{2, p}}, 
\end{align*}

\noi
provided that $\ta < - \frac12$.
This proves (i).

Next, we consider the case $-\frac 12 \leq \ta < 0$.
In this case, we need to lose either integrability or differentiability.
By Young's inequality with $\frac 2p + 1 = \frac 1r + \frac 2q$, we have
\begin{align}
\|f\|_{\MH^{\ta, p}} 
& \les \bigg( \sum_{n\in \Z} \jb{n}^{2\ta r}\bigg)^{\frac{1}{2r}}
\bigg\| \int_{I_n}  | \ft f (\xi)|^2  d \xi \bigg\|_{\l^\frac{q}{2}_n}^\frac{1}{2} \notag \\
& \les \|f\|_{M^{2,q}},
\label{2.1}
\end{align}

\noi
provided that $2\ta r < -1$, namely, 
$\frac{1}{q} > \frac{1}{p} + \frac 12 + \ta$.
The second bound  in \eqref{equiv2} follows from 
applying H\"older's inequality to the right-hand side of \eqref{2.1}.

\section{Control on the modulation and Fourier-Lebesgue norms}

In this section, we present the proof of Theorem \ref{THM:1}.
In view of the local well-posedness results
\cite{GRUN, GH}, it suffices to establish global-in-time bounds
for smooth solutions to \eqref{NLS1} on $\M = \R$ or $\T$.

\subsection{On the real line}
We first consider the real line case.
Our main goal is to prove the following global-in-time bound.

\begin{proposition}\label{PROP:main1}
Let $2\le p<\infty$.
Then, there exists $C= C(p) >0$ such that 
\begin{align*}
\|u(t)\|_{M^{2, p}(\R)}\le C (1+\|u(0)\|_{M^{2, p}(\R)})^{\frac p2 - 1}\|u(0)\|_{M^{2, p}(\R)}
\end{align*}

\noi
for any Schwartz class  solution $u$ to \eqref{NLS1} on $\R$
and any $t \in \R$.

\end{proposition}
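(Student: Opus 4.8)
The plan is to pass to the equivalent $\MH^{-1,p}$-norm (Lemma \ref{LEM:equiv}(i), with $\ta = -1 < -\frac12$) and to read off the square of this norm as the $\l^{p/2}$-sum over $n \in \Z$ of the \emph{leading terms} of the perturbation determinants $\al(\frac12;\mathcal{G}_n u)$. Concretely, since $\ft{\mathcal{G}_n u}(\xi) = \ft u(\xi + n)$ at a fixed time, \eqref{X2} at $\kk = \frac12$ shows that the leading term of $\al(\frac12;\mathcal{G}_n u)$ equals $\int_\R \jb{\xi - n}^{-2}|\ft u(\xi)|^2 d\xi = \|M_n u\|_{H^{-1}}^2$, whence $\|u\|_{M^{2,p}}^2 \sim \|u\|_{\MH^{-1,p}}^2 = \big\|\,\|M_n u\|_{H^{-1}}^2\,\big\|_{\l^{p/2}_n}$. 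As $\mathcal{G}_n$ maps solutions of \eqref{NLS1} on $\R$ to solutions for every $n \in \Z$ (and preserves the Schwartz class), the strategy is to freeze $n$, use conservation of the full determinant $\al(\frac12;\mathcal{G}_n u)$ from Lemma \ref{LEM:KVZ}(i), and transfer it to an almost-conservation of the leading term $\|M_n u\|_{H^{-1}}^2$, the discrepancy being the higher-order ($j \ge 2$) part of the series \eqref{X1}.

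First I would establish a linear bound in the small-data regime $\|u(0)\|_{M^{2,p}} \le \eps_0$. Set $B_n := \int_\R \log(4 + 4(\xi - n)^2)\jb{\xi - n}^{-1}|\ft u(\xi)|^2 d\xi$, so that by \eqref{X3} and the change of variables $\xi \mapsto \xi + n$, the quantity $B_n$ is comparable to $\|(\tfrac12 - \dx)^{-\frac12}(\mathcal{G}_n u)(\tfrac12 + \dx)^{-\frac12}\|_{\If_2}^2$ and coincides with the integral in \eqref{X1a} for $\mathcal{G}_n u$ at $\kk = \frac12$. By \eqref{X1x} the higher-order terms are bounded by $B_n^2$, so the leading term differs from the full determinant by $O(B_n^2)$. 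Absorbing the logarithm, $B_n \les \|M_n u\|_{H^{\ta'}}^2$ for any $\ta' \in (-\frac12, 0)$; hence $\|(B_n)_n\|_{\l^p_n} \les \|u\|_{\MH^{\ta', 2p}}^2$, and choosing $\ta'$ sufficiently close to $-\frac12$ and applying Lemma \ref{LEM:equiv}(ii) (with summation exponent $2p$ and second index $q = p$, admissible since $\frac1p > \frac{1}{2p} + \frac12 + \ta'$ for such $\ta'$) yields the crucial bound $\|(B_n)_n\|_{\l^p_n} \les \|u\|_{M^{2,p}}^2$. In particular $\sup_n B_n \les \|u\|_{M^{2,p}}^2$, so for $\eps_0$ small the condition \eqref{X1a} holds for every $\mathcal{G}_n u$ and Lemma \ref{LEM:KVZ}(i) gives $\al(\frac12;\mathcal{G}_n u(t)) = \al(\frac12;\mathcal{G}_n u(0))$. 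Combining conservation with the $O(B_n^2)$ error and taking $\l^{p/2}_n$-norms gives $\|u(t)\|_{M^{2,p}}^2 \les \|u(0)\|_{M^{2,p}}^2 + \|u(0)\|_{M^{2,p}}^4 + \|u(t)\|_{M^{2,p}}^4$; since the solution is smooth, $t \mapsto \|u(t)\|_{M^{2,p}}$ is continuous, and a bootstrap (absorbing $\|u(t)\|_{M^{2,p}}^4 \le \eps_0^2\|u(t)\|_{M^{2,p}}^2$) upgrades this to $\|u(t)\|_{M^{2,p}} \les \|u(0)\|_{M^{2,p}}$ for all $t$.

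To remove the smallness assumption I would invoke the scaling symmetry \eqref{scaling}. Writing $A = \|u(0)\|_{M^{2,p}}$ and choosing $\ld \sim \max(1, A^p)$, the scaling bound $\|f_\ld\|_{M^{2,p}} \les \ld^{-1/p}\|f\|_{M^{2,p}}$ for $\ld \ge 1$ (the inhomogeneous analogue of \eqref{scaling2} with $s = 0$, obtained by a change of variables in the block decomposition) makes $\|u_{0,\ld}\|_{M^{2,p}} \les \ld^{-1/p}A \le \eps_0$. Applying the small-data bound to $u_\ld$ gives $\|u_\ld(\ld^2 t)\|_{M^{2,p}} \les \|u_{0,\ld}\|_{M^{2,p}}$, and undoing the scaling — where the companion estimate $\|u(t)\|_{M^{2,p}} \les \ld^{1/2}\|u_\ld(\ld^2 t)\|_{M^{2,p}}$ holds by the same block-counting, now refining the blocks — yields $\|u(t)\|_{M^{2,p}} \les \ld^{\frac12 - \frac1p} A$. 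Inserting $\ld \sim \max(1, A^p)$ produces the exponent $\ld^{\frac12 - \frac1p} \sim A^{\frac p2 - 1}$ for $A \gtrsim 1$, giving the claimed bound $\|u(t)\|_{M^{2,p}} \les (1 + A)^{\frac p2 - 1} A$.

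The main obstacle is the error estimate $\|(B_n)_n\|_{\l^p_n} \les \|u\|_{M^{2,p}}^2$ of the second step. The higher-order terms of \eqref{X1} are governed by the Hilbert--Schmidt norm \eqref{X3}, which is of $H^{-1/2}$-type (with a logarithmic loss) and hence a half-derivative stronger than the $H^{-1}$-type leading term one wishes to control — precisely the mechanism behind the threshold $s > -\frac12$ in \cite{KVZ}. What closes the gap is the strict subcriticality $p < \infty$: it provides the room needed to dominate the $\MH^{\ta', 2p}$-norm ($\ta' > -\frac12$) by the $M^{2,p}$-norm through Lemma \ref{LEM:equiv}(ii), and it renders the scaling genuinely contractive ($\ld^{-1/p} < 1$). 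Both effects degenerate as $p \to \infty$, in agreement with the ill-posedness of \eqref{NLS1} at the critical space $M^{2,\infty}(\R)$.
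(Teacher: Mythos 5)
Your proposal is correct and follows essentially the same route as the paper's proof: Galilean transforms $\mathcal{G}_n$ combined with the perturbation determinant at $\kk=\tfrac12$, the equivalence $\|u\|_{M^{2,p}}\sim\|u\|_{\MH^{-1,p}}$, an $\l^{p/2}_n$-summation of the leading terms against an $O(B_n^2)$ error, a continuity/bootstrap argument for small data, and the scaling reduction with $\ld\sim(1+\|u(0)\|_{M^{2,p}})^p$ producing the exponent $\tfrac p2-1$. The only cosmetic difference is that you control $\|(B_n)_n\|_{\l^p_n}$ by routing through $\MH^{\ta',2p}$ and Lemma \ref{LEM:equiv}(ii), whereas the paper applies H\"older and Young's inequality directly to the unit-interval block decomposition with the weight $(1+(\xi-n)^2)^{-\frac12+\dl}$ — the same computation in different packaging.
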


\begin{proof}
Let us first consider the small data case.
The general case follows from the small data case and the scaling 
property of the $M^{2, p}$-norm.
Fix $2\leq p < \infty$.
Let $u$ be a global-in-time Schwartz class solution  to \eqref{NLS1}, satisfying
\begin{align}
\| u(0)\|_{M^{2, p}} \leq \eps \ll 1
\label{scaling3}
\end{align}

\noi
for some small $\eps >0$ (to be chosen later).
Given $n \in \Z$, 
define $\{u_n\}_{n\in \Z}$ by 
\begin{align}
u_n(x, t) = \mathcal{G}_n(u)(x, t) =  e^{-inx} e^{in^2 t } u ( x - 2n t, t),
\label{Y0b}
\end{align}

\noi
where $\mathcal{G}_n$ is as in \eqref{Galilei}.
Note that we have
\begin{align}
|\ft u_n(\xi, t)|  = |\ft u(\xi + n, t)|
\label{Y0a}
\end{align}

\noi
for any $n \in \Z$ and $\xi, t\in \R$.
In view of the Galilean symmetry, 
$u_n$ is the solution to \eqref{NLS1} with $u_n|_{t = 0} = M_n u(0)$,
where $M_n$ is as in \eqref{mod4a}.

In the following, we fix $\kk  = \frac 12$
and set $\al (u) = \al(\frac 12 ; u)$.
From \eqref{X1}, \eqref{X1x}, \eqref{X2},  and \eqref{X3} with \eqref{Y0a}, 
we have
\begin{align}
\bigg| \al( u_n(t)) 
- \int_\R \frac{ |\ft u_n(\xi, t)|^2}{1 + \xi^2 } d\xi\bigg|
& \leq  \sum_{j = 2}^\infty
\frac{1}{j}\bigg\| 
(\tfrac 12 - \dx)^{-\frac{1}{2}} \, u_n(t)\,  (\tfrac 12 + \dx)^{-\frac 12}\bigg\|_{\mathfrak{I}_2(\R)}^{2j}\notag\\
& \les \sum_{j = 2}^\infty
\bigg(\int_\R \frac{|\ft u_n(\xi, t)|^2}{(1 + \xi^2 )^{\frac{1}{2}-\dl}} d\xi\bigg)^j \notag\\
& \les \sum_{j = 2}^\infty
\bigg(\int_\R \frac{|\ft u(\xi, t)|^2}{(1 + (\xi - n)^2 )^{\frac{1}{2}-\dl}} d\xi\bigg)^j
\label{Y1}
\end{align}

\noi
for any  $\dl > 0$.
By H\"older's inequality, 
we can choose sufficiently small  $\dl = \dl(p) > 0$ such that 
\begin{align}
\int_\R \frac{|\ft u(\xi, 0)|^2}{(1 + (\xi- n)^2 )^{\frac{1}{2}-\dl}} d\xi
& \sim \sum_{k \in \Z} 
\frac{1}{(1 + (k - n)^2 )^{\frac{1}{2}-\dl}} \| \ft{u(0)} \|_{L^2_\xi(I_k)}^2 \notag\\
& \les  \| u(0) \|_{M^{2, p}}^2
\label{Y1a}
\end{align}

\noi
uniformly in $n \in \Z$, 
where $I_k = [k - \frac 12, k + \frac12)$, $k \in \Z$, as above.
Then,
in view of  \eqref{scaling3} and \eqref{Y1a}, 
we can choose 
 $\eps > 0$ sufficiently small  such that the series
on the right-hand side of \eqref{Y1} is convergent at time $t = 0$.
Then, by continuity  in time, there exists a small time interval $I$ around $t = 0$
such that the  series
on the right-hand side of \eqref{Y1} is convergent
uniformly for any $t \in I$.
Moreover, by choosing $\eps > 0$ sufficiently small, 
we may assume that \eqref{X1a} is satisfied for all $t \in I$.

As a consequence,  we have
\begin{align*}
\bigg| \al(u_n(t)) 
- \int_\R \frac{ |\ft u_n(\xi, t))|^2}{1 + \xi^2 } d\xi\bigg|
& \les 
\bigg(\int_\R \frac{|\ft u(\xi, t)|^2}{(1 + (\xi-n)^2 )^{\frac{1}{2}-\dl}} d\xi\bigg)^2
\end{align*}

\noi
for any $t \in I$ and $n \in \Z$.
Now, compute the $\l^\frac{p}{2}_n$-norm of both sides.
Choose $\dl = \dl(p) > 0$ sufficiently small
such that $ (\frac12 - \dl)\cdot \frac{p}{p-1} > \frac{1}{2}$.
Then, by Young's inequality, 
we have
\begin{align}
 \bigg\| \al( u_n(t)) 
& - \int_\R \frac{ |\ft u_n(\xi, t)|^2}{1 + \xi^2 } d\xi\bigg\|_{\l^\frac{p}{2}_n}\notag\\
& \les 
\bigg\|\int_\R \frac{|\ft u(\xi, t)|^2}{(1 + (\xi-n)^2 )^{\frac{1}{2}-\dl}} d\xi\bigg\|_{\l^p_n}^2\notag\\
& \sim
\bigg\|\sum_{k \in \Z} 
 \frac{1}{(1 + (k-n)^2 )^{\frac{1}{2}-\dl}}
\| \ft u(\xi, t)\|_{L^2_\xi(I_k)}^2\bigg\|_{\l^p_n}^2\notag\\
& \les
 \Big\| \| \ft u(\xi, t)\|_{L^2_\xi(I_n)}^2\Big\|_{\l^\frac{p}{2}_n}^2\notag\\
& \sim  \| u(t)\|_{M^{2, p}}^4
\label{Y3}
\end{align}

\noi
for any $t \in I$.
Therefore, from Lemma \ref{LEM:equiv}, \eqref{Y3}, and 
the conservation of $\al(u_n)$, $n \in \Z$, 
\begin{align*}
\| u(t) \|_{M^{2, p}}^2
& \sim 
\|  u (t) \|_{\MH^{-1, p}}^2
 \le\| \al( u_n(t)) \|_{\l^\frac{p}{2}_n}
+   \| u(t)\|_{M^{2, p}}^4 \notag\\ 
& \les
\| u(0) \|_{\MH^{-1, p}}^2
+   \| u(0)\|_{M^{2, p}}^4
+   \| u(t)\|_{M^{2, p}}^4\notag\\
& \les
\|u(0) \|_{M^{2, p}}^2
+  \| u(0)\|_{M^{2, p}}^4
+  \| u(t)\|_{M^{2, p}}^4
\end{align*}

\noi
for all $t \in I$.
Namely, we have 
\begin{align*}
\| u(t) \|_{M^{2, p}}^2
& \le
C_0\|u(0) \|_{M^{2, p}}^2
+ C_0 \| u(0)\|_{M^{2, p}}^4
+  C_0\| u(t)\|_{M^{2, p}}^4
\end{align*}

\noi
for all $t \in I$.
By  choosing $\eps > 0$ sufficiently small, 
we can apply a continuity argument and conclude that
\begin{align*}
\| u(t) \|_{M^{2, p}}
& \les\|u(0) \|_{M^{2, p}}
\end{align*}

\noi
for all $t \in \R$.
This proves Proposition \ref{PROP:main1} for the small data case.

Next, we consider the general case.
Given 
a  global-in-time Schwartz class solution $u$ to \eqref{NLS1}, 
let $u_\ld$ be as in \eqref{scaling}.
Then 
in view of \eqref{scaling2}, we can choose 
sufficiently large $\ld \gg 1$
such that 
\begin{align}
\| u_\ld (0)  \|_{ M^{2, p}} \le C\ld^{ - \frac 1p} \| u(0)\|_{ M^{2, p}}
\le \eps \ll1
\label{Y4}
\end{align}

\noi
as in \eqref{scaling3}.
In particular, we may choose 
\begin{align}
\ld \sim  (1+ \| u(0)\|_{ M^{2, p}})^p.
\label{Y4a}
\end{align}

\noi
Hence, by the small data case presented above, we obtain
\begin{align}
\| u_\ld(t) \|_{M^{2, p}}
 \les\|u_\ld(0) \|_{M^{2, p}}
\label{Y5}
\end{align}

\noi
for all $t \in \R$.
Finally, recall that 
\begin{align}
\| u(t) \|_{M^{2, p}}
 \les \ld^{\frac 12} \|u_\ld(\ld^2 t) \|_{M^{2, p}}.
\label{Y6}
\end{align}

\noi
By putting \eqref{Y4}, \eqref{Y5}, and \eqref{Y6}
with \eqref{Y4a}, we conclude that 
\begin{align*}
\| u(t) \|_{M^{2, p}}
 \les 
 (1+ \| u(0)\|_{ M^{2, p}})^{\frac{p}{2} - 1}
 \|u(0) \|_{M^{2, p}}
\end{align*}

\noi
for all $t \in \R$.
This completes the proof of Proposition \ref{PROP:main1}
and hence the proof of Theorem~\ref{THM:1}\,(i).
\end{proof}

\subsection{On the circle}

In the remaining part of this paper,  we discuss the proof of Theorem~\ref{THM:1}
in  the periodic case.
While the essential part of the argument remains the same, 
we need to pay attention to the scaling argument since it modifies the spatial domain.
Given $\ld >0$, let $\T_\ld = \R/(2\pi \ld \Z)$
and we use the following convention:
\begin{align}
 \ft f(\xi) = \frac{1}{\sqrt{2\pi}} \int_0^{2\pi\ld} f(x) e^{-ix \xi} dx
\qquad\text{and}\qquad
 f(x) =\frac{1}{\sqrt{2\pi}\ld} \sum_{\xi \in  \Z_\ld} \ft f(\xi) e^{ix \xi}
\label{Z0}
\end{align}

\noi
for functions on the dilated torus  $\T_\ld$, where $\Z_\ld = \ld^{-1} \Z$.
In this setting, 
Plancherel's identity is expressed as 
\begin{align}
 \|f \|_{L^2(\T_\ld)} = \| \ft f(\xi) \|_{L^{2}_\xi(\Z_\ld, (d\xi)_\ld)},
\label{Z0a}
 \end{align}

\noi
where $(d \xi)_\ld$ is the normalized counting measure on $\Z_\ld$:
\[ \int_{\Z_\ld} f(\xi) (d\xi)_\ld = \frac{1}{\ld} \sum_{\xi \in \Z_\ld}  f(\xi).\]

\noi
Hence, we define  the Fourier-Lebesgue space $\FL^p(\T_\ld)$ by the norm:
\begin{align*}
\|f \|_{\F L^{p}(\T_\ld)} = \| \ft f(\xi) \|_{L^{p}_\xi(\Z_\ld, (d\xi)_\ld)}.
\end{align*}

\noi
For simplicity of the notation, 
we set
$L^{p}_\xi(\Z_\ld) = L^{p}_\xi(\Z_\ld, (d\xi)_\ld)$.
Under this convention, we have the following scaling property:
\begin{align*}
\|f_\ld \|_{\F L^{p}(\T_\ld)} =  \ld^{-\frac{1}{p}}\|f \|_{\F L^{p}(\T)} , 
\end{align*}

\noi
where  
\begin{align}
f_\ld(x) = \ld^{-1} f(\ld^{-1}x).
\label{Z1a}
\end{align}
In particular, note that the $\FL^\infty$-norm is invariant under the scaling symmetry.
We also  record the following identity:
\begin{align}
\ft{fg}(\xi) = \frac{1}{\sqrt{2\pi}} \int_{ \Z_\ld} \ft f(\eta) \ft g(\xi - \eta)(d\eta)_\ld
\label{Z1b}
\end{align}

\noi
for $\xi \in \Z_\ld$.

In the following, we restrict our attention to $\ld \geq 1$.
On the standard torus, we have the equivalence of the modulation spaces
and the Fourier-Lebesgue spaces (see \eqref{mod5}).
On a dilated torus $\T_\ld$, $\ld \gg 1$, however, these two spaces do not coincide.
As in \eqref{mod}, we define the modulation spaces on $\T_\ld$ by the norm:
\begin{equation*} 
\|f\|_{M^{r, p} (\T_\ld)} 
= \big\| \|\psi_n(D) f \|_{L_x^r(\T_\ld)} \big\|_{\l^p_n(\Z)}.
\end{equation*}

\noi
Note that $\psi_n(D)$ is basically a ``smooth''
projection onto the frequencies $J_n = [n-1, n+1]\cap \Z_\ld$.
On the one hand, when $\ld = 1$,  
we have only $O(1)$ many  frequencies in $J_n$, giving rise to the equivalence \eqref{mod5}.
On the other hand, when $\ld \gg 1$, 
there are $O(\ld)$ 
 many  frequencies in $J_n$
 and hence the $M^{r, p}(\T_\ld)$- and the $\FL^p(\T_\ld)$-norms are 
 not equivalent uniformly in period $\ld\geq 1$.
As we see below, for our purpose, it is more convenient to work on the modulation space $M^{2, p}(\T_\ld)$
on a dilated torus $\T_\ld$.
In view of Plancherel's identity \eqref{Z0a}, we have
\begin{align*} 
\|f\|_{M^{2, p} (\T_\ld)} 
& = \big\| \|\psi_n(\xi) \ft f(\xi) \|_{L^2_\xi(\Z_\ld)} \big\|_{\l^p_n(\Z)}\notag\\
& = \bigg\| \bigg(\int_{ \Z_\ld} |\psi_n(\xi) \ft f(\xi)|^2 (d\xi)_\ld\bigg)^\frac{1}{2} \bigg\|_{\l^p_n(\Z)}.
\end{align*}

Given a function $f$ on $\T$, let $f_\ld$ be as in \eqref{Z1a}.
Then, 
a straightforward computation shows the following scaling relation:
\begin{align}
\| f\|_{\FL^p(\T)} & \les \ld^{\frac 12}\|f_\ld\|_{M^{2, p}(\T_\ld)}, 
\label{Z3}\\
\|f_\ld\|_{M^{2, p}(\T_\ld)} & \les \ld^{-\frac{1}{p}}\| f\|_{\FL^p(\T)}
\label{Z4}
\end{align}

\noi
for $2\le p \le \infty$.

Lastly, we define the modulated Sobolev 
space $\MH^{\ta, p}(\T_\ld)$ on a dilated torus $\T_\ld$ by the norm:
\begin{align*}
\|f\|_{\MH^{\ta, p}(\T_\ld)}
& = \big\| \| M_n f\|_{H^\ta(\T_\ld)} \big\|_{\l^p_n(\Z)}\notag\\
& = \bigg( \sum_{n \in \Z} \| \jb{\xi - n}^{\ta} \ft f(\xi)\|_{L^2_\xi(\Z_\ld)}^p \bigg)^\frac{1}{p}, 
\end{align*}

\noi
where $M_n$ is as in \eqref{mod4a}.
Note that the outer summation ranges over $n \in \Z$ (and not over $\Z_\ld$).
Then, arguing as in the proof of Lemma \ref{LEM:equiv}, 
we obtain the following equivalence when $\ta < -\frac 12$ and $2 \leq p \leq \infty$:
\begin{align}
 \| f \|_{\MH^{\ta, p}(\T_\ld)} \sim \|f\|_{M^{2, p}(\T_\ld)}.
 \label{Z6}
\end{align}

\noi
As in the real line case, the equivalence \eqref{Z6} with $\ta = -1$ plays an important role
in our analysis.

We first state basic properties of the perturbation determinant $\al(\kk; u)$ in \eqref{X1}
on a dilated torus $\T_\ld$.
The following three lemmas are just restatements of Lemma \ref{LEM:KVZ}
on a dilated torus.

\begin{lemma}\label{LEM:error}
Let $\kk > \kk_0$ for some $\kk_0 >0$ and $\ld \geq 1$.
Then, we have
\begin{align}
\big\|(\kk-\dx)^{-\frac 12} u (\kk+\dx)^{-\frac 12} \big\|^2_{\If_2(\T_\ld)} 
\sim \int_{ \Z_\ld} \log\bigl(4 + \tfrac{\xi^2}{\kk^2}\big)\frac{|\ft u(\xi)|^2}{\sqrt{4\kk^2 + \xi^2}}(d\xi)_\ld,
\label{error1}\\
\big\|(\kk+\dx)^{-\frac 12} \cj u (\kk-\dx)^{-\frac 12} \big\|^2_{\If_2(\T_\ld)} 
\sim \int_{ \Z_\ld} \log\bigl(4 + \tfrac{\xi^2}{\kk^2}\big)\frac{|\ft u(\xi)|^2}{\sqrt{4\kk^2 + \xi^2}}
(d\xi)_\ld,
\label{error2}
\end{align}

\noi
for any smooth function $u$ on $\T_\ld$,
where the implicit constant depends on $\kk_0>0$.

\end{lemma}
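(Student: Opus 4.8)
The plan is to compute the squared Hilbert--Schmidt norm on the left-hand side of \eqref{error1} directly on the Fourier side, reducing it to a weighted $\Z_\ld$-sum in which only a one-dimensional inner Riemann sum differs from its real-line analogue. Set $A = (\kk - \dx)^{-\frac12} u\, (\kk + \dx)^{-\frac12}$ and recall that $(\kk \mp \dx)^{-\frac12}$ are the Fourier multiplier operators with symbols $(\kk \mp i\xi)^{-\frac12}$. By the convolution identity \eqref{Z1b}, the integral kernel of $A$ in the Fourier representation on $\Z_\ld$ is
\begin{align*}
K(\xi, \eta) = \frac{1}{\sqrt{2\pi}}\,(\kk - i\xi)^{-\frac12}\,\ft u(\xi - \eta)\,(\kk + i\eta)^{-\frac12}.
\end{align*}
By Plancherel's identity \eqref{Z0a}, applied in both variables, $\|A\|_{\If_2(\T_\ld)}^2$ equals the squared $L^2_{\xi,\eta}(\Z_\ld^2)$-norm of $K$; substituting $\zeta = \xi - \eta$ (a measure-preserving change of the summation index on the group $\Z_\ld$) and using $|\kk \mp i\xi| = \sqrt{\kk^2 + \xi^2}$ gives
\begin{align*}
\|A\|_{\If_2(\T_\ld)}^2 = \frac{1}{2\pi}\int_{\Z_\ld} |\ft u(\zeta)|^2\, I_\ld(\zeta)\,(d\zeta)_\ld, \qquad I_\ld(\zeta) := \int_{\Z_\ld} \frac{(d\eta)_\ld}{\sqrt{\kk^2 + \eta^2}\,\sqrt{\kk^2 + (\eta + \zeta)^2}}.
\end{align*}
Since the outer summation is already an integral over $\Z_\ld$ weighted by $|\ft u(\zeta)|^2$, comparing with the right-hand side of \eqref{error1} reduces the entire claim to the pointwise equivalence
\begin{align*}
I_\ld(\zeta) \sim \frac{\log\!\big(4 + \tfrac{\zeta^2}{\kk^2}\big)}{\sqrt{4\kk^2 + \zeta^2}},
\end{align*}
to be shown uniformly in $\zeta \in \Z_\ld$, $\kk > \kk_0$, and $\ld \geq 1$.

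Next I would replace the Riemann sum $I_\ld(\zeta)$ by the corresponding real-line integral $\wt I(\zeta) = \int_\R (\kk^2 + \eta^2)^{-\frac12}(\kk^2 + (\eta + \zeta)^2)^{-\frac12}\, d\eta$. The equivalence $\wt I(\zeta) \sim \frac{\log(4 + \zeta^2/\kk^2)}{\sqrt{4\kk^2 + \zeta^2}}$ is precisely the computation underlying the real-line estimate \eqref{X3} of Lemma \ref{LEM:KVZ}\,(iii): the two factors concentrate near $\eta = 0$ and $\eta = -\zeta$, each contributing a logarithmic factor, and their separation $|\zeta|$ produces the numerator $\log(4 + \zeta^2/\kk^2)$ against the denominator $\sqrt{4\kk^2 + \zeta^2}$. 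It therefore suffices to establish the two-sided bound $I_\ld(\zeta) \sim \wt I(\zeta)$ with implicit constants depending only on $\kk_0$.

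The main obstacle, and the only point where periodicity genuinely enters, is this Riemann-sum comparison uniformly in $\ld \geq 1$. I would exploit that the grid spacing of $\Z_\ld$ is $\ld^{-1} \leq 1$ while the positive integrand $g(\eta) = (\kk^2 + \eta^2)^{-\frac12}(\kk^2 + (\eta + \zeta)^2)^{-\frac12}$ varies slowly at the scale of the grid: on each cell $[\eta, \eta + \ld^{-1})$ one has $|s - \eta| \leq 1$, and together with $\kk \geq \kk_0$ this forces $\kk^2 + s^2 \sim_{\kk_0} \kk^2 + \eta^2$ (and likewise for the shifted factor) for every $s$ in the cell, whence $\ld^{-1} g(\eta) \sim_{\kk_0} \int_\eta^{\eta + \ld^{-1}} g(s)\, ds$. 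Summing over $\eta \in \Z_\ld$ yields $I_\ld(\zeta) \sim_{\kk_0} \wt I(\zeta)$, which combined with the previous two paragraphs proves \eqref{error1}. Finally, \eqref{error2} follows at once from the same computation: the kernel of $(\kk + \dx)^{-\frac12} \cj u\, (\kk - \dx)^{-\frac12}$ has identical modulus structure with $\ft u(\xi - \eta)$ replaced by $\ft{\cj u}(\xi - \eta)$, so after the substitution $\zeta = \xi - \eta$ one obtains the same weight $I_\ld(\zeta)$; since $I_\ld(-\zeta) = I_\ld(\zeta)$ by the symmetry $\eta \mapsto -\eta$ of $\Z_\ld$ and $|\ft{\cj u}(\zeta)| = |\ft u(-\zeta)|$, the resulting expression coincides with that for \eqref{error1}, and the right-hand sides of \eqref{error1} and \eqref{error2} are literally identical.
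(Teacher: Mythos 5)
Your proposal is correct, and while the first half coincides with the paper's proof, the crux is handled by a genuinely different argument. Like the paper, you reduce $\|A\|_{\If_2(\T_\ld)}^2$ to the double sum $\frac{1}{2\pi}\iint_{\Z_\ld^2} |\ft u(\zeta)|^2\, g(\eta)\,(d\eta)_\ld (d\zeta)_\ld$ with $g(\eta) = (\kk^2+\eta^2)^{-\frac12}(\kk^2+(\eta+\zeta)^2)^{-\frac12}$ (the paper computes the kernel in physical space, you on the Fourier side; the outcome is the paper's \eqref{error3}). At that point the paper evaluates the inner discrete sum directly: it rescales to integer frequencies with effective parameter $\ld\kk$ and splits into the regions $|n+m|\ll|n|$, $|n+m|\sim|n|$, $|n+m|\gg|n|$ to produce the logarithm, as in \eqref{error4}. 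You instead compare the Riemann sum $I_\ld(\zeta)$ to the real-line integral $\wt I(\zeta)$, observing that the grid spacing $\ld^{-1}\le 1$ is below the scale on which $g$ varies (here is exactly where $\ld\ge 1$ and $\kk>\kk_0$ enter, and where the $\kk_0$-dependence of the constants is generated), and then you import the evaluation of $\wt I(\zeta)$ from the real-line computation behind \eqref{X3} (\cite[Lemma 4.1]{KVZ}). Your route makes the role of the hypotheses more transparent and avoids redoing the logarithmic sum estimates on the lattice, at the cost of leaning on the continuum lemma; the paper's route is self-contained on the lattice. Your symmetry argument for \eqref{error2} (evenness of $I_\ld$ together with $|\ft{\cj u}(\zeta)| = |\ft u(-\zeta)|$) is also fine; the paper simply asserts that \eqref{error2} follows from a similar computation.
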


\begin{proof}
We only consider the first estimate \eqref{error1}
as the second estimate \eqref{error2} follows from a similar computation.
Let $A = (\kk-\dx)^{-\frac 12} u (\kk+\dx)^{-\frac 12}$.
Then, with \eqref{Z0} and \eqref{Z1b},  we have
\begin{align*}
A f (x) & = (\kk-\dx)^{-\frac 12} u (\kk+\dx)^{-\frac 12} f\\
& = \frac{1}{2\pi \ld^2}
\sum_{\xi\in \Z_\ld} e^{i\xi x} (\kk -i\xi)^{-\frac12} 
\sum_{\eta \in \Z_\ld} \ft u (\xi-\eta) (\kk + i \eta)^{-\frac12} \ft f (\eta) \\
& = \frac{1}{2\pi \ld^2} 
\int_{\T_\ld} \bigg( \sum_{\xi, \eta\in \Z_\ld} e^{i\xi x} e^{-i\eta y} 
\frac{\ft u (\xi-\eta)}{\sqrt{2\pi (\kk -i\xi)  (\kk + i \eta)}} \bigg) f (y) dy
\end{align*}

\noi
 for $f\in L^2(\T_\ld)$.
Thus,  the integral kernel $K(x, y)$ of $A$ is given by 
\[
K(x,y) = 
 \frac{1}{2\pi \ld^2} 
 \sum_{\xi, \eta\in \Z_\ld} e^{i\xi x} e^{-i\eta y} 
\frac{\ft u (\xi-\eta)}{\sqrt{ 2\pi (\kk -i\xi)  (\kk + i \eta)}}.
\]

\noi
Hence, from the definition of the $\If_2$-norm and Plancherel's identity \eqref{Z0a}, we have
\begin{align}
\|A\|_{\If_2}^2 & = \iint_{\T_\ld^2} |K(x,y)|^2  dxdy\notag\\
& = \frac{1}{2\pi\ld^2}  \sum_{\xi,\eta \in \Z_\ld} \frac{|\ft u (\xi - \eta)|^2}{\sqrt{(\kk^2 + \xi^2)(\kk^2 + \eta^2)}}.
\label{error3}
\end{align}

\noi
On the other hand, with $\xi = \frac n \ld$ and $\eta = \frac{m}{\ld}$, $n, m \in \Z$, we have
\begin{align}
\frac{1}{\ld^2} \sum_{\eta  \in \Z_\ld}&  
 \frac1{\sqrt{(\kk^2 + (\xi+\eta)^2)( \kk^2 +  \eta^2)}} \notag\\
& =  \sum_{m  \in \Z}  
 \frac1{\sqrt{((\ld\kk)^2 + (n+m)^2)( (\ld\kk)^2 +  m^2)}} \notag \\
 \intertext{By separately estimating the contributions 
 from (i) $|n + m| \ll |n|$, (ii)  $|n+ m|\sim |n|$, and (iii) $|n+m|\gg |n|$,}
&\sim \bigg(\log\big(1 + \tfrac{n^2}{(\ld\kk)^2}\big)
 \frac1{\sqrt{(\ld\kk)^2 +  n^2}}
 + \frac{|n|}{(\ld\kk)^2 + n^2}\bigg)
\notag  \\
&\sim \frac{1}{\ld}\log\big(4 + \tfrac{\xi^2}{\kk^2}\big)
 \frac1{\sqrt{4\kk^2 +  \xi^2}}.
\label{error4}
\end{align}

\noi
Therefore, the first estimate \eqref{error1} follows from \eqref{error3} and \eqref{error4}.
\end{proof}

Next, we estimate the leading term in \eqref{X1}.

\begin{lemma}\label{LEM:main1}
Let $\kk > 0$ and $\ld \geq 1$.
Then, we have
\begin{equation}\label{A1}
\Re \tr\big\{(\kk-\dx)^{-1} u (\kk+\dx)^{-1} \cj u \big\}
 =\frac{1 + e^{-2\pi\ld\kk}} {1 - e^{-2\pi\ld\kk}} 
 \cdot \int_{ \Z_\ld} \frac{2\kk |\ft u (\xi)|^2 }{4\kk^2 + \xi^2}(d\xi)_\ld
\end{equation}

\noi
for any smooth function $u$ on $\T_\ld$.

\end{lemma}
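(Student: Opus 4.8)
The plan is to imitate the kernel computation carried out in the proof of Lemma \ref{LEM:error}, but now with the full powers of $\kk \pm \dx$ and with the operator split into two Hilbert--Schmidt factors so that the trace can be read off from the product-of-kernels formula. Write $A_1 = (\kk - \dx)^{-1} u$ and $A_2 = (\kk + \dx)^{-1} \cj u$, so that the operator in \eqref{A1} is $A_1 A_2$. Using the Fourier conventions \eqref{Z0} and the convolution identity \eqref{Z1b} exactly as in Lemma \ref{LEM:error}, I would compute the integral kernels
\[
K_{A_1}(x,y) = \frac{1}{(2\pi)^{3/2}\ld^2}\sum_{\xi,\zeta\in\Z_\ld}\frac{e^{i\xi x}e^{-i\zeta y}}{\kk - i\xi}\,\ft u(\xi-\zeta), \qquad K_{A_2}(y,x) = \frac{1}{(2\pi)^{3/2}\ld^2}\sum_{\eta,\omega\in\Z_\ld}\frac{e^{i\eta y}e^{-i\omega x}}{\kk + i\eta}\,\ft{\cj u}(\eta - \omega).
\]
Since $u$ is smooth and $\sum_{\xi\in\Z_\ld}(\kk^2+\xi^2)^{-1}<\infty$, both kernels lie in $L^2(\T_\ld^2)$, so $A_1, A_2$ are Hilbert--Schmidt and $\tr(A_1A_2) = \iint_{\T_\ld^2}K_{A_1}(x,y)K_{A_2}(y,x)\,dxdy$ (cf. \eqref{X1x}).

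Next I would perform the $x$- and $y$-integrations. By orthogonality of $\{e^{i\xi x}\}_{\xi\in\Z_\ld}$ on $\T_\ld$ these force $\omega = \xi$ and $\eta = \zeta$ and produce two factors of $2\pi\ld$; using $\ft{\cj u}(\zeta - \xi) = \cj{\ft u(\xi - \zeta)}$, the product $\ft u(\xi - \zeta)\,\ft{\cj u}(\zeta - \xi)$ collapses to $|\ft u(\xi - \zeta)|^2$, leaving
\[
\tr(A_1 A_2) = \frac{1}{2\pi\ld^2}\sum_{\xi,\zeta\in\Z_\ld}\frac{|\ft u(\xi - \zeta)|^2}{(\kk - i\xi)(\kk + i\zeta)}.
\]
Setting $\mu = \xi - \zeta \in \Z_\ld$ and summing first over $\xi$, this becomes $\frac{1}{2\pi\ld^2}\sum_{\mu}|\ft u(\mu)|^2\, S(\mu)$, where $S(\mu) = \sum_{\xi\in\Z_\ld}[(\kk - i\xi)(\kk + i\xi - i\mu)]^{-1}$. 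Because $|\ft u(\mu)|^2$ is real, only $\Re S(\mu)$ survives.

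The heart of the matter is the evaluation of the lattice sum $S(\mu)$. Writing $\xi = n/\ld$, $\mu = m/\ld$ with $n,m\in\Z$ and $a = \ld\kk$, I would use the partial-fraction identity $[(a-in)(a+i(n-m))]^{-1} = (2a - im)^{-1}\big[(a-in)^{-1} + (a + i(n-m))^{-1}\big]$ together with the classical cotangent summation $\sum_{n\in\Z}(a - in)^{-1} = \pi\coth(\pi a)$ and the invariance of this value under the integer shift $n\mapsto n-m$, obtaining $S(\mu) = 2\pi\ld\,\coth(\pi\ld\kk)\,(2\kk - i\mu)^{-1}$. Taking real parts gives $\Re S(\mu) = 2\pi\ld\,\coth(\pi\ld\kk)\cdot\frac{2\kk}{4\kk^2 + \mu^2}$, and substituting back, together with $\coth(\pi\ld\kk) = \frac{1 + e^{-2\pi\ld\kk}}{1 - e^{-2\pi\ld\kk}}$ and the measure in \eqref{Z0a}, yields exactly \eqref{A1}.

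The main obstacle will be the conditional convergence at the partial-fraction step: although $S(\mu)$ itself converges absolutely (its summand decays like $|\xi|^{-2}$), each of the two split sums converges only conditionally, so I must interpret both as symmetric limits over $|n|\le N$ and verify that the shift $n\mapsto n-m$ alters only finitely many boundary terms, each of size $O(N^{-1})$, and is thus harmless in the limit. It is precisely this discrete lattice sum, in place of the real-line integral, that produces the correction factor $\coth(\pi\ld\kk)$; as $\ld\to\infty$ this factor tends to $1$ and \eqref{A1} formally degenerates to the real-line identity \eqref{X2}.
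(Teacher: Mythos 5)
Your argument is correct, and it reaches \eqref{A1} by a genuinely different route from the paper. The paper stays in physical space: it periodizes the real-line convolution kernels $k^\kk_\mp$ to obtain explicit kernels $K^\kk_\mp$ on $\T_\ld$, observes the key identity $\big(K^\kk_-\big)^2 = \frac{1 + e^{-2\pi\ld\kk}}{1 - e^{-2\pi\ld\kk}}\, K^{2\kk}_-$ (which is where the correction factor is born), and then concludes with Parseval. You instead work entirely on the Fourier side, writing the trace as the double lattice sum $\frac{1}{2\pi\ld^2}\sum_{\xi,\zeta}\frac{|\ft u(\xi-\zeta)|^2}{(\kk-i\xi)(\kk+i\zeta)}$ and evaluating the inner sum $S(\mu)$ by partial fractions and the cotangent identity $\sum_{n}(a-in)^{-1}=\pi\coth(\pi a)$; the factor $\coth(\pi\ld\kk) = \frac{1+e^{-2\pi\ld\kk}}{1-e^{-2\pi\ld\kk}}$ then emerges from the Eisenstein-type summation rather than from squaring a periodized kernel. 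I checked the constants: the kernel normalizations, the orthogonality factors $(2\pi\ld)^2$, and the final identification $\Re S(\mu)=2\pi\ld\coth(\pi\ld\kk)\frac{2\kk}{4\kk^2+\mu^2}$ all combine to give exactly the right-hand side of \eqref{A1}. You are also right to flag the conditional convergence of the two split sums; interpreting them as symmetric partial sums over $|n|\le N$ and noting that the shift $n\mapsto n-m$ changes only $O(|m|)$ boundary terms of size $O(N^{-1})$ is a legitimate fix. The paper's route is slightly slicker and avoids conditional convergence altogether, while yours is more self-contained on the Fourier side, makes the degeneration to the real-line identity \eqref{X2} as $\ld\to\infty$ transparent, and adapts directly to complex $\kk$ as in Remark \ref{REM:Rowan}.
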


\begin{proof}
Recall from \cite{KVZ} that 
for $\kk > 0$, the operators $(\kk - \dx)^{-1}$ and $(\kk+\dx)^{-1}$, 
 admit the convolution  kernels
$k^\kk_-(x) =  \ind_{(-\infty, 0]}(x) \cdot e^{\kk x}$ and $k^\kk_+(x) = k^\kk_-(-x)$.
Define 
$K^\kk_\mp(x)$ to be the periodization of $k^\kk_\mp(x)$ with period $2\pi \ld$:
\[ K^\kk_\mp(x) = \sum_{n \in \Z} k^\kk_\mp(x - 2\pi \ld n).\]

\noi
Then, the Fourier coefficients are given by
\begin{align*}
\ft{K^\kk_\mp}(\xi) 
& = \frac{1}{\sqrt{2\pi}} \int_0^{2\pi\ld} K^\kk_\mp(x) e^{-ix\xi} dx
= \frac{1}{\sqrt{2\pi}} \int_\R k^\kk_\mp (x) e^{-ix\xi} dx\notag \\
& = \frac{1}{\kk \mp i \xi}
\end{align*}

\noi
for $\xi \in \Z_\ld$.
Namely, 
$K^\kk_\mp(x)$ represents the convolution kernels for 
 $(\kk - \dx)^{-1}$ and $(\kk+\dx)^{-1}$ on $\T_\ld$, respectively.
Also, note that
$K^\kk_+(-x) = K^\kk_-(x)$
 and 
 \begin{align*}
K^\kk_-(x) = \frac{e^{\kk (x - 2\pi \ld  \ceil{\frac{x}{2\pi\ld}})}}{1 - e^{-2\pi\ld\kk}},
 \end{align*}

\noi
where $\ceil{\cdot}$ denotes the ceiling function, i.e.~$\ceil{x}$
denotes the smallest integer $n$ with $n \geq x$.
In particular, we have
 \begin{align*}
\big(K^\kk_-\big)^2(x) 
& = \frac{1 - e^{-4\pi\ld\kk}} {(1 - e^{-2\pi\ld\kk})^2}
\cdot \frac{e^{2\kk (x - 2\pi \ld  \ceil{\frac{x}{2\pi\ld}})}}{1 - e^{-4\pi\ld\kk}}\\
& = \frac{1 + e^{-2\pi\ld\kk}} {1 - e^{-2\pi\ld\kk}}
\cdot 
K^{2\kk}_-\big(x) .
 \end{align*}

\noi
 From these observations and Parseval's identity, 
we have
 \begin{align*}
\text{LHS of \eqref{A1}}  
&= \Re \iint_{\T_\ld^2} K_-(x - y) u (y) K_+(y - x) \cj{u(x)} dx dy\\
&= \Re \iint_{\T_\ld^2} \big(K_-(x - y)\big)^2  u (y) \cj{u(x)} dx dy\\
  &= 
  \frac{1 + e^{-2\pi\ld\kk}} {1 - e^{-2\pi\ld\kk}}\cdot
  \Re \int_{ \Z_\ld} \frac{|\ft u (\xi)|^2}{2\kk - i \xi}(d\xi)_\ld\\
&= \frac{1 + e^{-2\pi\ld\kk}} {1 - e^{-2\pi\ld\kk}}\cdot
\int_{ \Z_\ld} \frac{2\kk|\ft u(\xi)|^2}{4\kk^2 + \xi^2}(d\xi)_\ld.
\end{align*}

\noi
This proves the identity \eqref{A1}.
\end{proof}

The conservation of $\al(\kk;u)$ follows as in the real line case.  

\begin{lemma} \label{LEM:conserved2}
Let $\kk > \kk_0$ for some $\kk_0 >0$ and $\ld \geq 1$.
 For a smooth solution $u$ to \eqref{NLS1} on a dilated torus, 
the quantity $\al(\kk; u)$ defined in \eqref{X1} is conserved,
provided that 
\begin{align}
\int_{ \Z_\ld} \log\big(4 + \tfrac{\xi^2}{\kk^2}\big)\frac{|\ft u(\xi)|^2}{\sqrt{4\kk^2 + \xi^2}}
(d\xi)_\ld
 \le c_0
 \label{A2}
\end{align}

\noi
for some absolute constant $c_0 > 0$, 
depending on $\kk_0>0$.

\end{lemma}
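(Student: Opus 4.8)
The plan is to follow the proof of the real-line statement in Lemma \ref{LEM:KVZ}(i) (that is, \cite[Proposition 4.3]{KVZ}) essentially verbatim, the point being that the conservation of $\al(\kk; u)$ is an algebraic consequence of the Lax pair structure of \eqref{NLS1} and that the only genuinely domain-dependent inputs are the Hilbert-Schmidt estimates and the leading-order trace identity, both of which are now available on the dilated torus through Lemma \ref{LEM:error} and Lemma \ref{LEM:main1}. Accordingly, I would first use the smallness assumption \eqref{A2} to secure absolute convergence of the series \eqref{X1} together with its term-by-term differentiability in $t$, and then carry out the (domain-independent) differentiation to show $\frac{d}{dt}\al(\kk; u) = 0$.

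For the convergence step, factor the $j$-th summand by writing $(\kk+\dx)^{-1} = (\kk+\dx)^{-\frac12}(\kk+\dx)^{-\frac12}$, so that
\[
(\kk-\dx)^{-\frac12} u (\kk+\dx)^{-1}\cj u (\kk-\dx)^{-\frac12} = AB,
\]
where $A = (\kk-\dx)^{-\frac12} u (\kk+\dx)^{-\frac12}$ and $B = (\kk+\dx)^{-\frac12}\cj u (\kk-\dx)^{-\frac12}$. Then $\tr\{[(\kk-\dx)^{-\frac12}u(\kk+\dx)^{-1}\cj u(\kk-\dx)^{-\frac12}]^j\} = \tr((AB)^j)$ is the trace of a product of $2j$ Hilbert-Schmidt operators, so the bound \eqref{X1x} gives $|\tr((AB)^j)| \le (\|A\|_{\If_2}\|B\|_{\If_2})^j$. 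By Lemma \ref{LEM:error}, i.e.~\eqref{error1}--\eqref{error2}, both $\|A\|_{\If_2}^2$ and $\|B\|_{\If_2}^2$ are comparable to the left-hand side of \eqref{A2}; hence, taking $c_0 = c_0(\kk_0)$ small enough that the resulting constant satisfies $\|A\|_{\If_2}\|B\|_{\If_2} \le C c_0 < 1$, the geometric majorant $\sum_{j\ge 1}\frac1j (Cc_0)^j$ controls \eqref{X1}, proving absolute convergence. Since $u$ is smooth, Lemma \ref{LEM:error} in particular guarantees that $A$ and $B$ have square-integrable kernels on $\T_\ld^2$, so every trace appearing in \eqref{X1} is well-defined.

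With convergence in hand I would differentiate \eqref{X1} in $t$ term by term, insert the evolution equation \eqref{NLS1} (and its complex conjugate) for $\dt u$ and $\dt \cj u$, and use the cyclicity of the trace together with the Lax identity $\frac{d}{dt}L(t;\kk) = [P(t,\kk), L(t;\kk)]$ to see that the differentiated series telescopes to zero. Equivalently, $\al(\kk; u)$ is the real part of the logarithm of a perturbation determinant of the Lax operator $L(t;\kk)$, whose time-invariance follows from $\dt L = [P,L]$ upon noting that the resulting trace is that of a commutator and vanishes by cyclicity. This computation is purely local in $x$ and makes no use of the underlying geometry, so it transfers word for word from the real-line argument of \cite{KVZ}; the leading contribution is precisely the term identified in Lemma \ref{LEM:main1}, which replaces \eqref{X2} in the periodic bookkeeping.

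I expect the main obstacle to be the rigorous justification of the term-by-term differentiation, namely producing a time-uniform majorant for the differentiated series. This is exactly where \eqref{A2} is used: the smallness must persist on the time interval under consideration. I would handle this by a short continuity/bootstrap argument: \eqref{A2} holds with room to spare at the base time, hence on a neighborhood by continuity of $t \mapsto u(t)$ in the relevant norm, and on that neighborhood the conservation just established feeds back through Lemma \ref{LEM:error} to keep $\|A\|_{\If_2}$ and $\|B\|_{\If_2}$ below threshold, allowing the interval to be opened up. The remaining care is the bookkeeping in the commutator cancellation, but since that step is algebraic and identical to the real-line case, it presents no new difficulty on $\T_\ld$.
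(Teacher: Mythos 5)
Your proposal is correct and follows essentially the same route as the paper, which itself omits the details and defers to the proof of Proposition 4.3 in \cite{KVZ}: the conservation is an algebraic computation that transfers verbatim to $\T_\ld$, with the smallness condition \eqref{A2} combined with \eqref{X1x} and Lemma \ref{LEM:error} guaranteeing absolute convergence and term-by-term differentiability of the series \eqref{X1}, and with the dependence of $c_0$ on $\kk_0$ entering exactly where you say, through the implicit constant in Lemma \ref{LEM:error}. Your additional remarks on the $A$, $B$ factorization and the continuity-in-time bookkeeping are consistent with how the paper uses the lemma in the proof of Proposition \ref{PROP:main2}.
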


\noi

The proof of Lemma \ref{LEM:conserved2} is mostly based on  algebraic computations
and thus we omit details.
See the proof of Proposition 4.3 in  \cite{KVZ}.
As in Lemma \ref{LEM:KVZ}, 
the smallness condition \eqref{A2}
 guarantees
term-by-term differentiation of the series \eqref{X1}.
Note that the dependence of $c_0$ on $\kk_0$
comes from the dependence of the implicit constant on $\kk_0$
in Lemma~\ref{LEM:error}.

As in the real line case, Theorem \ref{THM:1} (ii) on $\T$ follows
once we prove the following proposition.

\begin{proposition}\label{PROP:main2}
Let $2\le p<\infty$.
Then, there exists $C= C(p) >0$ such that 
\begin{align*}
\|u(t)\|_{\FL^p(\T)}\le C \big(1+\|u(0)\|_{\FL^p(\T)}\big)^{\frac p2 - 1} \|u(0)\|_{\FL^p(\T)}
\end{align*}

\noi
for any smooth solution $u$ to \eqref{NLS1} on $\T$
and any $t \in \R$.

\end{proposition}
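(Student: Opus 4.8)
The plan is to mimic the real-line argument of Proposition \ref{PROP:main1}, but carried out on a dilated torus $\T_\ld$ so that the scaling symmetry becomes available despite its absence on the fixed torus $\T$. First I would reduce to a small-data statement on $\T_\ld$: given a smooth solution $u$ on $\T$, set $u_\ld = \ld^{-1} u(\ld^{-1}\,\cdot\,)$ as in \eqref{Z1a}, which solves \eqref{NLS1} on $\T_\ld$. By \eqref{Z4} we may choose $\ld \sim (1 + \|u(0)\|_{\FL^p(\T)})^p$ so that $\|u_\ld(0)\|_{M^{2,p}(\T_\ld)} \le \eps \ll 1$. The whole argument is then run for $u_\ld$ on $\T_\ld$, and at the very end I would transfer back to $\T$ via \eqref{Z3} and the scaling of $\ld$, which produces the claimed exponent $\frac p2 - 1$ exactly as in \eqref{Y4}--\eqref{Y6}.

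For the small-data estimate on $\T_\ld$, I would fix $\kk = \frac12$ and, for each $n \in \Z$, apply the Galilean transform $\mathcal{G}_n$ to form $u_n = \mathcal{G}_n(u_\ld)$, noting $|\ft{u_n}(\xi)| = |\ft{u_\ld}(\xi + n)|$; here $n \in \Z$ is forced (rather than $n\in\Z_\ld$) since the Galilean shift must be an element of the dual group of $\T_\ld$, and $\Z \subset \Z_\ld$. Using the expansion \eqref{X1} together with Lemma \ref{LEM:error} (to bound the $\If_2$-norms) and Lemma \ref{LEM:main1} (for the leading term), I would estimate the error $\bigl|\al(u_n(t)) - c_\ld\!\int_{\Z_\ld} \frac{|\ft{u_n}(\xi,t)|^2}{1+\xi^2}(d\xi)_\ld\bigr|$, where $c_\ld = \frac{1 + e^{-2\pi\ld\kk}}{1 - e^{-2\pi\ld\kk}}$ is the periodic correction factor; since $\ld \ge 1$ and $\kk = \frac12$, this factor is bounded above and below by absolute constants, so it does not interfere. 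As in \eqref{Y1}--\eqref{Y1a}, the error is controlled by $\bigl(\int_{\Z_\ld} \frac{|\ft{u_\ld}(\xi,t)|^2}{(1+(\xi-n)^2)^{1/2-\dl}}(d\xi)_\ld\bigr)^2$ for small $\dl = \dl(p) > 0$, and a discrete Schur/Young estimate against $I_k$-blocks gives the analogue of \eqref{Y1a} uniformly in $n$. The smallness \eqref{scaling3} guarantees convergence of the series and, by continuity in time plus \eqref{A2}, the conservation of $\al(u_n)$ from Lemma \ref{LEM:conserved2} on a small interval $I$.

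The heart of the argument is then the $\l^{p/2}_n$-summation, paralleling \eqref{Y3}: I would take the $\l^{p/2}_n$-norm of the error bound, choose $\dl$ so that $(\frac12 - \dl)\cdot\frac{p}{p-1} > \frac12$, and apply Young's inequality in the $n$-variable together with the blockwise identity $\int_{\Z_\ld}\frac{|\ft{u_\ld}(\xi)|^2}{1+\xi^2}(d\xi)_\ld \sim \sum_k \jb{k-n}^{-2}\|\ft{u_\ld}\|_{L^2_\xi(J_k)}^2$, where the blocks $J_k = [k-\frac12,k+\frac12)\cap \Z_\ld$ now carry the normalized counting measure. The equivalence \eqref{Z6} with $\ta = -1$ identifies $\|u_\ld\|_{M^{2,p}(\T_\ld)}^2 \sim \|u_\ld\|_{\MH^{-1,p}(\T_\ld)}^2$ with the $\l^{p/2}_n$-sum of the leading terms $\int_{\Z_\ld}\frac{|\ft{u_n}(\xi)|^2}{1+\xi^2}(d\xi)_\ld$, closing the loop; a continuity argument in the small-data regime then yields $\|u_\ld(t)\|_{M^{2,p}(\T_\ld)} \les \|u_\ld(0)\|_{M^{2,p}(\T_\ld)}$ for all $t$.

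The main obstacle I anticipate is purely bookkeeping with the normalized counting measure $(d\xi)_\ld$: I must check that every estimate used on $\R$ — the Schur bound \eqref{Y1a}, the Young inequality in \eqref{Y3}, and the block decomposition — holds \emph{uniformly in} $\ld \ge 1$, since otherwise the final transfer back to $\T$ via \eqref{Z3} would pick up an uncontrolled $\ld$-dependence and destroy the clean exponent $\frac p2 - 1$. The key points reassuring me that this goes through are that Lemma \ref{LEM:error} already provides $\ld$-uniform bounds (its implicit constant depends only on $\kk_0$), the periodic correction factor $c_\ld$ in Lemma \ref{LEM:main1} is comparable to $1$ for $\kk = \frac12$ and $\ld \ge 1$, and the Young/Schur kernels $\jb{k-n}^{-2(1/2-\dl)}$ summed over $n \in \Z$ (the \emph{integer} lattice, not $\Z_\ld$) are manifestly independent of $\ld$. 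Once this uniformity is verified, the proof of Proposition \ref{PROP:main2}, and hence Theorem \ref{THM:1}\,(ii), follows exactly as in the real-line case.
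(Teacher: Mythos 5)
Your proposal is correct and follows essentially the same route as the paper's proof: scaling to the dilated torus $\T_\ld$ with $\ld \sim (1+\|u(0)\|_{\FL^p})^p$, Galilean transforms with integer shifts $n \in \Z \subset \Z_\ld$, the periodic analogues of the perturbation-determinant lemmas (Lemmas \ref{LEM:error}, \ref{LEM:main1}, \ref{LEM:conserved2}) with the correction factor $C_\ld \sim 1$, the $\l^{p/2}_n$-summation via Young's inequality, the equivalence \eqref{Z6} with $\ta = -1$, and the transfer back to $\T$ via \eqref{Z3}. The $\ld$-uniformity you flag is indeed the only delicate point, and your justification of it matches what the paper relies on.
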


\begin{proof}

Fix $2\leq p < \infty$.
Let $u$ be a global-in-time smooth solution to \eqref{NLS1} on $\T$.
For $\ld \in  \NB $, 
let $u_\ld$ denote the scaled solution to \eqref{NLS1} on 
the dilated torus $\T_\ld$ defined by \eqref{scaling}.
Given small $\eps > 0$ (to be chosen later), 
it follows from  \eqref{Z4} that we can choose sufficiently large $\ld = \ld(\| u(0)\|_{\FL^p(\T)})  \gg 1$ 
such that 
\begin{align}
\|u_\ld(0)\|_{M^{2, p}(\T_\ld)} & \le C \ld^{-\frac{1}{p}}\| u(0)\|_{\FL^p(\T)}
\le \eps \ll 1. 
\label{A3}
\end{align}

\noi
In particular, we may choose 
\begin{align}
\ld \sim  (1+ \| u(0)\|_{ \FL^{p}(\T)})^p.
\label{A3x}
\end{align}

Given $n \in \Z$, 
define $\{u_n\}_{n\in \Z}$ by 
\begin{align*}
u_{\ld, n}(x, t)= 
\mathcal{G}_n(u_\ld)(x, t)
 = e^{-inx} e^{in^2 t } u_\ld ( x - 2n t, t).
\end{align*}

\noi
In view of the Galilean symmetry, 
$u_{\ld, n}$ is the solution to \eqref{NLS1} on $\T_\ld$ with $u_{\ld, n}|_{t = 0} = M_n u_\ld(0)$.
Moreover, we have 
\begin{align}
|\ft u_{\ld, n}(\xi, t)|  = |\ft u_\ld (\xi + n, t)|.
\label{A3a}
\end{align}

\noi
for any $n \in \Z$,  $\xi \in \Z_\ld$,  and $ t\in \R$.
Here, we used the fact that $\ld $ is an integer
such that $\Z \subset \Z_\ld$.

In the following, we fix $\kk  = \frac 12$
and set $\al (u) = \al(\frac 12 ; u)$
and 
\[C_\ld =  \frac{1 + e^{-\pi\ld}} {1 - e^{-\pi\ld}}.\]

\noi
Note that $C_\ld\sim 1$ for $\ld \ge 1$.
From \eqref{X1} with  Lemmas \ref{LEM:error} and \ref{LEM:main1}
and \eqref{A3a}, 
we have
\begin{align*}
\bigg| \al( u_{\ld, n}(t)) 
- C_\ld \int_{\Z_\ld}  \frac{ |\ft u_{\ld, n}(\xi, t)|^2}{1 + \xi^2 } (d\xi)_\ld\bigg|
& \leq  \sum_{j = 2}^\infty
\frac{1}{j}\bigg\| 
(\tfrac 12 - \dx)^{-\frac{1}{2}} \, u_{\ld, n}(t)\,  (\tfrac 12 + \dx)^{-\frac 12}\bigg\|_{\mathfrak{I}_2(\T_\ld)}^{2j}\notag\\
& \les \sum_{j = 2}^\infty
\bigg(\int_{\Z_\ld} 
 \frac{|\ft u_\ld(\xi, t)|^2}{(1 + (\xi - n)^2 )^{\frac{1}{2}-\dl}}(d\xi)_\ld \bigg)^j
\end{align*}

\noi
for any  $\dl > 0$.
By H\"older's inequality, 
we can choose sufficiently small  $\dl = \dl(p) > 0$ such that 
\begin{align}
\int_{\Z_\ld}
 \frac{|\ft u_\ld(\xi, t)|^2}{(1 + (\xi - n)^2 )^{\frac{1}{2}-\dl}}(d\xi)_\ld
& \sim \sum_{k \in \Z} 
\frac{1}{(1 + (k - n)^2 )^{\frac{1}{2}-\dl}} 
\| \ft u_\ld(\xi, 0) \|_{L^2_\xi(I_k\cap \Z_\ld,  (d\xi)_\ld)}^2 \notag\\
& \les  \| u_\ld(0) \|_{M^{2, p}(\T_\ld)}^2
\label{A5}
\end{align}

\noi
uniformly in $n \in \Z$, 
where $I_k = [k - \frac 12, k + \frac12)$, $k \in \Z$, as above.
Then,
in view of  \eqref{A3},  \eqref{A5}, 
and continuity in time, 
we can argue as in the real line case and 
conclude that there exists a time interval $I$ around $t = 0$
such that 
\begin{align*}
\bigg| \al(u_{\ld, n}(t)) 
- C_\ld \int_{\Z_\ld}  \frac{ |\ft u_{\ld, n}(\xi, t))|^2}{1 + \xi^2 } (d\xi)_\ld\bigg|
& \les 
\bigg(\int_{\Z_\ld}  \frac{|\ft u_\ld(\xi, t)|^2}{(1 + (\xi-n)^2 )^{\frac{1}{2}-\dl}} (d\xi)_\ld \bigg)^2
\end{align*}

\noi
for any $t \in I$ and $n \in \Z$, by choosing $\eps > 0$ sufficiently small.
Moreover, 
we may assume that \eqref{A2} is satisfied for all $t \in I$
so that Lemma \ref{LEM:conserved2} is applicable.

Now, compute the $\l^\frac{p}{2}_n$-norm of both sides.
By choosing $\dl = \dl(p) > 0$ sufficiently small, 
Young's inequality yields
\begin{align}
 \bigg\| \al( u_{\ld, n}(t)) 
& - C_\ld \int_{\Z_\ld} \frac{ |\ft u_{\ld, n}(\xi, t))|^2}{1 + \xi^2 } (d\xi)_\ld\bigg\|_{\l^\frac{p}{2}_n}\notag\\
& \les 
\bigg\|\sum_{k \in \Z} 
 \frac{1}{(1 + (k-n)^2 )^{\frac{1}{2}-\dl}}
\| \ft u_\ld (\xi, t)\|_{L^2_\xi(I_k\cap \Z_\ld,  (d\xi)_\ld)}^2\bigg\|_{\l^p_n}^2\notag\\
& \les
 \Big\| \| \ft u_\ld(\xi, t)\|_{L^2_\xi(I_k\cap \Z_\ld,  (d\xi)_\ld)}^2\Big\|_{\l^\frac{p}{2}_n}^2\notag\\
& \sim  \| u_\ld(t)\|_{M^{2, p}(\T_\ld)}^4
\label{A7}
\end{align}

\noi
for any $t \in I$.
Therefore, from  \eqref{Z6}, \eqref{A7}, and 
the conservation of $\al(u_{\ld, n})$, $n \in \Z$, 
\begin{align*}
\| u_\ld(t) \|_{M^{2, p}(\T_\ld)}^2
& \sim 
\|  u_\ld (t) \|_{\MH^{-1, p}(\T_\ld)}^2\notag\\
& \les
\| u_\ld(0) \|_{\MH^{-1, p}(\T_\ld)}^2
+   \| u_\ld(0)\|_{M^{2, p}(\T_\ld)}^4
+   \| u_\ld(t)\|_{M^{2, p}(\T_\ld)}^4\notag\\
& \les
\|u_\ld(0) \|_{M^{2, p}(\T_\ld)}^2
+  \| u_\ld(0)\|_{M^{2, p}(\T_\ld)}^4
+  \| u_\ld(t)\|_{M^{2, p}(\T_\ld)}^4
\end{align*}

\noi
for all $t \in I$.
Hence, by  choosing $\eps > 0$ sufficiently small, 
we conclude from the continuity argument that 
\begin{align*}
\| u_\ld(t) \|_{M^{2, p}(\T_\ld)}
& \les\|u_\ld(0) \|_{M^{2, p}(\T_\ld)}
\end{align*}

\noi
for all $t \in \R$.
Combining this with \eqref{Z3} and \eqref{A3}, 
we obtain
\begin{align}
\| u(t) \|_{\FL^{p}(\T)}
& \les \ld^{\frac{1}{2}-\frac{1}{p}}\|u(0) \|_{\FL^p(\T)}
\label{A8}
\end{align}

\noi
for all $t \in \R$.
Finally, Proposition \ref{PROP:main2} follows from \eqref{A8} with \eqref{A3x}.
\end{proof}

\appendix 
\section{On the modified KdV equation}
\label{SEC:mKdV}

In this appendix, we  consider the 
complex-valued modified KdV equation (mKdV) on $\M = \R$ or $\T$:
\begin{align}
\begin{cases}
 \dt u = - \dx^3 u \pm 6 |u|^2 \dx u, \\
u|_{t= 0} = u_0
\end{cases}
\label{mKdV1}
\end{align}

\noi
and discuss how to derive  the global-in-time bounds \eqref{bd1} and \eqref{bd2}
for Schwartz solutions to the mKdV \eqref{mKdV1}.
The equation \eqref{mKdV1} is known to be completely integrable 
and is closely related to the cubic NLS \eqref{NLS1} \cite{Hirota, SS, KapM, KVZ}.
When the initial data $u_0$ is real-valued, 
the corresponding solution $u$ to \eqref{mKdV1} remains real-valued, 
thus solving
the following real-valued mKdV:
\begin{align}
 \dt u = - \dx^3 u \pm 6 u^2 \dx u.
\label{mKdV2}
\end{align}

\noi
The mKdV enjoys the following scaling symmetry 
\begin{align*}
u(x, t) \longmapsto
u_\ld(x, t) = \ld^{-1} u (\ld^{-1}x, \ld^{-3}t), 
\end{align*}

\noi
inducing the same scaling-critical regularity 
as the cubic NLS \eqref{NLS1}.
In terms of 
the homogeneous Fourier-Lebesgue space $\dot {\FL}^{s, p}(\R)$, 
the critical regularity is given by 
$s_\text{crit}(p) = - \frac 1p$.

The Cauchy problem \eqref{mKdV1} 
has  been extensively studied;
 the complex-valued mKdV \eqref{mKdV1} is known to be locally well-posed
in $H^s(\M)$
for $s \geq \frac 14$ on the real line \cite{KPV93, Tao} and
for $s \geq \frac 13$ on the circle 
\cite{ BO2,  TT, NTT, MPV}.
In the real-valued setting, 
the $I$-method has been applied to 
prove global well-posedness of 
the mKdV \eqref{mKdV2}
in $H^s(\M)$
for $s \geq \frac 14$ on the real line 
and for $s \geq \frac 12$ on the circle 
\cite{CKSTT,  Kishi1}.
See also \cite{CHT, Moli2}
for global existence results in $H^s(\R)$, $s > - \frac {1}{8}$, 
and $L^2(\T)$ in the real-valued setting.
We point out that, 
in the periodic setting, 
the well-posedness studies stated above have been performed on 
the following renormalized mKdV on $\T$:
\begin{align}
\textstyle  \dt u = - \dx^3 u \pm 6 (|u|^2 - \fint_\T |u|^2 dx)\dx u.
\label{mKdV3}
\end{align}

\noi
As in the case of  the cubic NLS \eqref{NLS1} and the renormalized
cubic NLS \eqref{NLS2},
it is easy to see, via the transform:
\[ u(x, t) \longmapsto u(x \pm 6 \mu t, t)\]

\noi
with $\mu = \fint_\T |u|^2 dx$,
that  
the mKdV \eqref{mKdV1} and the renormalized mKdV \eqref{mKdV3}
are equivalent in $L^2(\T)$.
It is, however, known that the renormalized mKdV \eqref{mKdV3} 
behaves better outside $L^2(\T)$.
Indeed, the (unrenormalized) mKdV \eqref{mKdV1} is known to be ill-posed
in negative Sobolev spaces $H^s(\T)$, $s < 0$, \cite{Moli2}
and Fourier-Lebesgue spaces $\FL^p(\T)$, $p > 2$ \cite{KapM}.

In \cite{KVZ}, 
Killip-Vi\c{s}an-Zhang showed that 
Lemma \ref{LEM:KVZ} 
also holds for Schwartz solutions $u$ to the complex-valued mKdV \eqref{mKdV1}.
Therefore, while it is not explicitly stated in \cite{KVZ}, 
their result yields
global well-posedness of the complex-valued
mKdV \eqref{mKdV1}
in $H^s(\M)$
for $s \geq \frac 14$ on the real line  and
for $s \geq \frac 13$ on the circle,
thus matching the known local well-posedness results.

In the real-valued and defocusing case
(i.e.~with the $+$ sign in \eqref{mKdV1}, \eqref{mKdV2}, and \eqref{mKdV3}), 
there are several further results exploiting the completely integrable structure
of the equation.
Before proceeding further, let us define the notion of 
{\it sensible weak solutions}.
See also~\cite{FO, OW5}.

\begin{definition}\label{DEF:sol} \rm
Let $B(\T)$ be a Banach space of functions on $\T$.
Given $u_0 \in B(\T)$, 
we say that
 $u \in C((t_0, t_1); B(\T))$
is a sensible weak solution
to an equation  on $(t_0, t_1)$, $-\infty \leq t_0 < 0 < t_1 \leq\infty$, if,
for any sequence $\{u_{0, n}\}_{n \in \NB}$ of smooth functions
tending to $u_0$ in $B(\T)$, 
the corresponding (classical) solutions $u_n(t)$ with $u_n|_{t = 0} = u_{0, n}$
converge to $u(t)$ in $B(\T)$ for each $t \in (t_0, t_1)$.

\end{definition}

We point that this notion of sensible weak solutions 
is  rather weak.
In particular, sensible weak solutions do not have to satisfy 
the equation even in the distributional sense.
In \cite{KapT}, Kappeler-Topalov 
proved global well-posedness (in the sense of sensible weak solutions) 
of the periodic 
real-valued defocusing mKdV \eqref{mKdV2}
in $L^2(\T)$.
In a recent paper~\cite{KapM}, Kappeler-Molnar 
studied the periodic real-valued defocusing renormalized  mKdV \eqref{mKdV3}
in the Fourier-Lebesgue spaces
and proved, in the sense of sensible weak solutions, 
 local well-posedness and small data global well-posedness
in $\FL^p(\T)$, $2 \le p < \infty$.\footnote{The small data global well-posedness 
also applies to the focusing case.
See Remark on p.\,2217 in \cite{KapM}.}
On the one hand, 
 Molinet \cite{Moli2}  applied the short-time Fourier restriction norm method
and proved that the solutions constructed in \cite{KapT}
are indeed distributional solutions.
On the other hand, 
the solutions  outside $L^2(\T)$
constructed in \cite{KapM} are not yet known to be distributional solutions
at this point.

We now state our result for the mKdV.

\begin{theorem}\label{THM:2}
Let $2 \leq p < \infty$. 

\begin{itemize}
\item[\textup{(i)}] 
The global-in-time bound \eqref{bd1}
in the modulation spaces $M^{2, p}(\R)$
holds for any Schwartz class  solution $u$ to 
 the complex-valued mKdV \eqref{mKdV1} on $\R$.

\smallskip

\item[\textup{(ii)}] 
The global-in-time bound \eqref{bd2}
in the Fourier-Lebesgue spaces $\FL^p(\T)$
holds for 
any smooth solution $u$ to the complex-valued mKdV \eqref{mKdV1} on $\T$.
In particular, 
the same  global-in-time bound \eqref{bd2}
also holds
any smooth solution $u$ to the complex-valued renormalized mKdV \eqref{mKdV3} on $\T$.

\end{itemize}

\end{theorem}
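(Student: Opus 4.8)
The plan is to run the proofs of Propositions \ref{PROP:main1} and \ref{PROP:main2} essentially verbatim, using that Killip-Vi\c{s}an-Zhang showed Lemma \ref{LEM:KVZ} to hold also for Schwartz solutions of the complex-valued mKdV \eqref{mKdV1}; in particular the perturbation determinant $\al(\kk; u)$ in \eqref{X1} is conserved along the mKdV flow, and its leading term is again given by \eqref{X2}. The only structural obstruction is that, unlike the NLS case, the frequency shift $M_n$ in \eqref{mod4a} does \emph{not} map solutions of \eqref{mKdV1} to solutions of \eqref{mKdV1}, so the conservation of $\al(\tfrac12; M_n u(t))$ cannot be invoked directly. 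Resolving this is the one genuinely new ingredient.

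First I would compute the equation satisfied by the modulated function $v = M_\be u = e^{-i\be x} u$ when $u$ solves \eqref{mKdV1}. Writing $u = e^{i\be x} v$ and expanding $\dx^3(e^{i\be x} v)$ and $|u|^2 \dx u$, one finds that, besides the mKdV dispersion $-\dx^3 v$ and nonlinearity $\pm 6|v|^2 \dx v$, the modulation produces the Schr\"odinger dispersion $-3i\be\,\dx^2 v$ and the cubic NLS-type nonlinearity $\pm 6 i \be |v|^2 v$, together with a transport term $3\be^2 \dx v$ and a phase term $i\be^3 v$. In other words, $v$ solves the \emph{mKdV-NLS equation}
\begin{align*}
\dt v = \big(\!-\dx^3 v \pm 6|v|^2 \dx v\big) + 3\be\big(\!-i\dx^2 v \pm 2 i |v|^2 v\big) + 3\be^2 \dx v + i\be^3 v,
\end{align*}
which is precisely the mKdV flow plus $3\be$ times the NLS flow \eqref{NLS1}, modulo the translation and phase flows.

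The key point is that $\al(\kk;\cdot)$ is conserved along \emph{each} of these four flows: along mKdV and NLS by Lemma \ref{LEM:KVZ}\,(i) (and its mKdV analogue in \cite{KVZ}), along the transport flow because the Lax operator $L(t;\kk)$ is merely conjugated by a spatial translation, and along the phase flow because every term of the series \eqref{X1} contains equally many factors of $u$ and $\cj u$ and is therefore invariant under $u\mapsto e^{i\ta} u$. Differentiating in $t$ and adding, one concludes $\frac{d}{dt}\al(\kk; v(t)) = 0$; since translations and phases do not change $\al$, this yields the conservation of $\al(\tfrac12; M_n u(t))$ needed in the argument. From here the proof is identical to the NLS case: using $|\ft{M_n u}(\xi, t)| = |\ft u(\xi + n, t)|$ as in \eqref{Y0a}, the leading terms \eqref{X2} reconstruct, after an $\l^{p/2}_n$-summation, the quantity $\|u(t)\|_{\MH^{-1,p}}^2 \sim \|u(t)\|_{M^{2,p}}^2$ via Lemma \ref{LEM:equiv}, while the higher-order terms $j\geq 2$ are absorbed exactly as in \eqref{Y1}--\eqref{Y3}. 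The reduction to small data uses the scaling bound \eqref{scaling2} on $\R$ and \eqref{Z3}--\eqref{Z4} on the dilated torus; note that the mKdV time-scaling exponent $\ld^{-3}$ differs from the NLS exponent $\ld^{-2}$ but acts only on the time variable, leaving the spatial norm scaling unaffected. On $\T$ the renormalized mKdV bound \eqref{mKdV3} follows from the unrenormalized one via the space translation $u(x,t)\mapsto u(x\pm 6\mu t, t)$, $\mu = \fint_\T |u|^2 dx$, which preserves both the $\FL^p(\T)$-norm and $\al$.

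The main obstacle is the verification just described that the boosted flow conserves $\al$. Concretely this means (a) carrying out the modulation computation with correct signs and coefficients so as to exhibit the exact NLS structure, and (b) re-justifying, under the combined mKdV-NLS flow, the term-by-term differentiation of the series \eqref{X1}; as in Lemma \ref{LEM:conserved2} one needs a smallness condition of the type \eqref{X1a}/\eqref{A2} to guarantee convergence, and it is here that the subcriticality $p<\infty$ re-enters, exactly as in the NLS analysis.
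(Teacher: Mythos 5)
Your proposal is correct and takes essentially the same approach as the paper: Appendix A introduces the Galilean transform $\Gf_\be$ in \eqref{Galilei2}, which converts the mKdV to the mKdV-NLS equation \eqref{mKdV4}, invokes conservation of $\al(\kk;\cdot)$ under both constituent flows (Lemma \ref{LEM:KVZ3}), and then repeats the NLS argument verbatim using $|\ft{u^\be}(\xi,t)| = |\ft u(\xi+\be,t)|$. Your only deviation is to work with the bare modulation $M_\be u$ and keep the residual transport and phase terms, observing separately that $\al$ is invariant under those two trivial flows, rather than absorbing them into the transform --- an equivalent bookkeeping choice.
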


On the circle, 
in view of 
the aforementioned local well-posedness
(in the sense of  sensible weak solutions)
in the Fourier-Lebesgue spaces $\FL^p(\T)$, $2 \leq p < \infty$, in \cite{KapM}
and 
the global-in-time bound \eqref{bd2} in Theorem \ref{THM:2} (ii), 
one may be tempted to conclude
global well-posedness 
(in the sense of  sensible weak solutions) of the real-valued defocusing
renormalized mKdV \eqref{mKdV3}, 
extending globally in time the sensible weak solutions constructed in~\cite{KapM}.
We, however, point out that the construction of the sensible weak solutions
in \cite{KapM} is carried out through the Birkhoff coordinates
and the local existence time is characterized by the openness of the range of the 
Birkhoff map.
In particular, the local existence time in \cite{KapM} is 
not given in terms of the size of initial data in an explicit manner
and hence we do not know  how to conclude such global well-posedness
(in the sense of  sensible weak solutions) 
of the real-valued defocusing
renormalized mKdV \eqref{mKdV3}
in  $\FL^p(\T)$, $2 \leq p < \infty$.
On the real line, there is no known local well-posedness in the modulation spaces $M^{2, p}(\R)$, $p > 2$, 
and hence the global-in-time bound does not lead to any global well-posedness
at this point.
See Remark~\ref{REM:mKdV} 
for a further discussion on this issue.

 Theorem \ref{THM:2} follows from a consideration analogous
 to the proof of Theorem \ref{THM:1} for the cubic NLS 
 but with one important difference.
 On the one hand, the Galilean symmetry~\eqref{Galilei}
 played an important role in the proof of Theorem \ref{THM:1}
for the cubic NLS.
 On the other hand, 
 it is known that the mKdV \eqref{mKdV1} does not enjoy the Galilean symmetry.
Nonetheless, 
if we define  a Galilean transform $\Gf_\be$, $\be \in \R$,
 by 
\begin{align}
u^\beta(x, t) = \Gf_\be(u)(x, t) := e^{- i\be x}e^{2 i\be^3 t}  u (x- 3\be^2 t, t), 
\label{Galilei2}
\end{align}

\noi
then a direct computation shows that if $u$ is a solution to the mKdV \eqref{mKdV1},
then $v = \Gf_\be(u)$ satisfies 
the following mKdV-NLS equation:
\begin{align}
 \dt v = (- \dx^3 v \pm 6 |v|^2 \dx v)
 + 3\be(-i \dx^2 v \pm  2i  |v|^2 v).
\label{mKdV4}
\end{align}

\noi
Then, 
from the conservation of 
$\al(\kk;u)$ in \eqref{X1}
for the mKdV flow and the cubic NLS flow
(\cite[Propositions 4.3 and 4.4]{KVZ}), 
we conclude that $\al(\kk;u)$ is also conserved under the mKdV-NLS equation \eqref{mKdV4}.

\begin{lemma} \label{LEM:KVZ3}
Let $\be \in \R$.
For a Schwartz class  solution $u$ to the mKdV-NLS equation~\eqref{mKdV4}, 
the quantity $\al(\kk; u)$ defined in \eqref{X1} is conserved,
provided that $\kk>0$ is sufficiently large such that 
the smallness condition \eqref{X1a} holds.
\end{lemma}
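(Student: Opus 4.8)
The plan is to leverage the two conservation results already established in \cite{KVZ}, namely that $\al(\kk;u)$ is conserved under both the mKdV flow \eqref{mKdV1} (Proposition 4.3 in \cite{KVZ}) and the cubic NLS flow \eqref{NLS1} (Proposition 4.4 in \cite{KVZ}), and to observe that the mKdV-NLS equation \eqref{mKdV4} is precisely a linear combination of these two evolutions. The key algebraic structure is that the right-hand side of \eqref{mKdV4} splits as $(-\dx^3 v \pm 6|v|^2\dx v) + 3\be(-i\dx^2 v \pm 2i|v|^2 v)$, where the first group is exactly the mKdV vector field and the second group is $3\be$ times the cubic NLS vector field (up to the sign convention matching \eqref{NLS1}).

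First I would set up the computation of $\frac{d}{dt}\al(\kk;u)$ along a Schwartz solution $u$ to \eqref{mKdV4}. Under the smallness assumption \eqref{X1a}, the series defining $\al(\kk;u)$ in \eqref{X1} may be differentiated term by term, exactly as justified for the individual flows in \cite{KVZ}; this is the point where the bound \eqref{X1x} together with the Hilbert-Schmidt estimate \eqref{X3} is invoked. Writing the time derivative of each trace term via the chain rule, the derivative of $\al$ along \eqref{mKdV4} is the sum of two contributions: the contribution coming from substituting the mKdV vector field $\dt v = -\dx^3 v \pm 6|v|^2\dx v$ into $\frac{d}{dt}\al$, and $3\be$ times the contribution coming from substituting the NLS vector field $\dt v = -i\dx^2 v \pm 2i|v|^2 v$ (which is $-i$ times the right-hand side of \eqref{NLS1}). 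Since $\al$ depends on $u$ through the integral-kernel traces in a way that is differentiable in $u$ with a well-defined linear response, the time derivative is linear in the velocity field $\dt v$, so it distributes across this sum.

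The decisive step is then to identify each of the two contributions with the quantity that \cite{KVZ} already proves to vanish. The mKdV contribution vanishes by Proposition 4.3 of \cite{KVZ}, and the NLS contribution vanishes by Proposition 4.4 of \cite{KVZ}; indeed, each of those propositions asserts precisely that the corresponding contribution to $\frac{d}{dt}\al(\kk;u)$ is zero under the smallness condition \eqref{X1a}. Hence $\frac{d}{dt}\al(\kk;u) = 0 + 3\be \cdot 0 = 0$ along the mKdV-NLS flow, for every $\be \in \R$. The main obstacle I anticipate is not a conceptual one but a bookkeeping one: I must verify that the smallness condition \eqref{X1a} is genuinely uniform in the sense required, i.e. that the same condition which licenses term-by-term differentiation for each separate flow also suffices here, and that the linear-response computation can legitimately be split across the two vector fields without cross terms. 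Because the differentiation is termwise and each term depends polynomially on $v$ and $\cj v$, the linearity in $\dt v$ is exact and no cross terms arise, so the splitting is justified and the conclusion follows directly from combining Propositions 4.3 and 4.4 of \cite{KVZ}.
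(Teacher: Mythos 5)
Your proposal is correct and follows essentially the same route as the paper: the authors likewise observe that the mKdV--NLS vector field in \eqref{mKdV4} is the mKdV vector field plus $3\be$ times the cubic NLS vector field, that $\frac{d}{dt}\al(\kk;u)$ (computed term by term under the smallness condition \eqref{X1a}) is linear in $\dt v$, and that each of the two contributions vanishes by Propositions 4.3 and 4.4 of \cite{KVZ}. In fact the paper states this even more tersely than you do, so your added care about termwise differentiation and the absence of cross terms is a reasonable elaboration of the same argument.
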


Once we have Lemma \ref{LEM:KVZ3}
and observe that 
$u^\be = \Gf_\be(u)$ in \eqref{Galilei2} satisfies
\[ 
|\ft{u^\be}(\xi, t)| = 
|\ft u (\xi + \be, t)|\] 

\noi
(compare this with \eqref{Y0a}), 
the global-in-time bounds \eqref{bd1} and \eqref{bd2}
for the mKdV \eqref{mKdV1} follow
exactly in the same manner 
as in the proof of Theorem~\ref{THM:1}.
Hence, we omit details.

\begin{remark}\label{REM:mKdV}\rm

There are  local well-posedness results
by Gr\"unrock \cite{Grun2} and Gr\"unrock-Vega \cite{GV}
on the modified KdV \eqref{mKdV1} 
in the Fourier-Lebesgue spaces on the real line.
In particular, it was shown in \cite{GV} that the mKdV \eqref{mKdV1}
is locally well-posed
in $\FL^{s, p}(\R)$
for $2\leq p < \infty$ and $s \geq \frac{1}{2p}$.
By taking $p \to \infty$, we see that this yields local well-posedness
in almost critical Fourier-Lebesgue spaces.
Unfortunately, our result does not allow us to control the Fourier-Lebesgue norms
on the real line
and thus we do not know how to extend the local-in-time solutions in \cite{Grun2, GV}
globally in time at this point.

In terms of the modulation spaces $M^{2, p}_s(\R)$, 
we recently proved local well-posedness of the mKdV \eqref{mKdV1}
for $s \geq \frac 14$ and $2\leq p < \infty$~\cite{OW2}, 
thus extending the local well-posedness in \cite{KPV93, Tao}.\footnote{See also a recent preprint \cite{CG}
for analogous local  well-posedness of  \eqref{mKdV1}, including $p = \infty$.}
On the one hand, 
$\dot \FL^{\frac 14 , \infty}(\R)$
scales like $\dot H^{-\frac 14}(\R)$
and thus we may say that 
$ M^{2, \infty}_{\frac 14}(\R) $ ``scales like'' 
$\dot H^{-\frac 14}(\R)$ in view of the embedding:
\[ M^{2, p}_{s}(\R) \supset \FL^{s, p}(\R)\]

\noi
for $ p \geq 2$.
On the other hand, 
the $M^{2, p}_{s}(\R)$-norm is weaker
than the $\FL^{s, p}$-norm
and the solution map to the mKdV \eqref{mKdV1}
on $\R$ fails to be locally uniformly continuous
in $M^{2, p}_{s}(\R)$ as soon as $s < \frac 14$.
This is  sharp contrast with the Fourier-Lebesgue case,
where local well-posedness was proved via a contraction argument even for some $s < \frac 14$
\cite{GV}.
Lastly, note that
a slight modification of the proof of Theorem \ref{THM:2} then yields
a global-in-time bound
in  $M^{2, p}_s(\R)$
for $2\le p < \infty$ and $\frac{1}{4} \leq s < 1 - \frac{1}{p}$,
which yields
global well-posedness of the mKdV~\eqref{mKdV1}
in the same range.
See Theorem \ref{THM:3} below.
Combining this global-in-time bound and a persistence-of-regularity
argument, we proved global well-posedness
of the mKdV \eqref{mKdV1}
in  $M^{2, p}_s(\R)$
for 
$s \geq \frac{1}{4}$ and 
$2\le p < \infty$.
See \cite{OW2}
for details.

\end{remark}

\begin{remark}\rm
As in the case of  the cubic NLS, 
the Dirac delta function 
plays a special role for the mKdV.
On the one hand, 
there is 
an existence result for the mKdV \eqref{mKdV1} on $\R$
with the Dirac delta function (with a small multiplicative constant) as initial data \cite{PV}.
On the other hand, via  a scaling analysis, 
one can easily see that continuous dependence
must fail at the Dirac delta function in 
the $\FL^{s, p}(\R)$- and $M^{2, p}_s(\R)$-topologies for $s p < -1$.
See also 
\cite{APH} for an analogous ill-posedness at the Dirac delta function
in the periodic case (in the focusing case).\footnote{While the argument  in \cite{APH} is carried out in $H^s(\T)$, 
it can be easily adapted to the Fourier-Lebesgue setting $\FL^{s, p}(\T)$, $sp < -1$.
We also point out that their result in the defocusing case (Theorem 6.3)
seems to be incorrect.  In particular, the proof of Theorem 6.2 contains an error;
the sech function in the proof needs to be replaced by the csch function,
which causes a breakdown of the proofs of Theorems 6.2 and 6.3.}

\end{remark}

\begin{remark}\label{REM:Rowan}\rm
In the following, we briefly discuss an alternative proof of the global-in-time bounds \eqref{bd1}
and \eqref{bd2} in Theorems \ref{THM:1} and \ref{THM:2}.
This alternative approach   has been brought to our attention by R.\,Killip.
The main idea is to 
consider $\al(\kk; u)$ in \eqref{X1} with a complex number $\kk \in \C$.
Then, we have the following statements.

\begin{lemma}\label{LEM:L1}
For $\kk \in \C$ with $\Re \kk>0$ and $u \in \S(\R)$, we have
\begin{align*}
\Re \tr\big\{(\kk-\dd_x)^{-1} u ( \kk+\dd_x)^{-1} \cj{u} \big\} 
& = \int \frac{2(\Re\kk) |\ft u(\xi + 2\Im \kk)|^2\,d\xi }{4(\Re\kk)^2 + \xi^2},\\
\big\|(\kk-\dd_x)^{-\frac 12} u (\kk+\dd_x)^{-\frac 12} \big\|^2_{\mathfrak I_2(\R)}
& \sim \int \log\big(4 + \tfrac{\xi^2}{(\Re\kk)^2}\big)
\frac{|\ft u(\xi + 2\Im \kk)|^2\,d\xi}{\sqrt{4(\Re\kappa)^2 + \xi^2}}.
\end{align*}

\noi
Moreover, for a Schwartz class  solution $u$ to the cubic NLS \eqref{NLS1} or the mKdV \eqref{mKdV1}, 
the quantity $\al(\kk; u)$ is conserved,
provided that $\Re \kk>0$ is sufficiently large such that 
\begin{align*}
\int_\R  \log\big(4 + \tfrac{\xi^2}{(\Re \kk)^2}\big)\frac{|\ft u(\xi+ 2\Im\kk )|^2}{\sqrt{4 (\Re \kk)^2 + \xi^2 }} d\xi \le c_0
\end{align*}

\noi
for some absolute constant $c_0 > 0$.

\end{lemma}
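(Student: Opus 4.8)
The plan is to read these three assertions as the complex-$\kk$ analogues of the real-$\kk$ facts already recorded in Lemma~\ref{LEM:KVZ}, and to prove them by rerunning the corresponding computations with $\kk$ allowed to be complex subject only to $\Re\kk>0$. The single new phenomenon is that a nonzero $\Im\kk$ produces a translation in the frequency variable, which is exactly what accounts for the appearance of $\ft u(\xi+2\Im\kk)$; everything else is a cosmetic modification of the arguments in \cite{KVZ} (and of Lemmas~\ref{LEM:main1} and \ref{LEM:error}).

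For the two identities I would start from the fact that for $\Re\kk>0$ the operator $(\kk-\dx)^{-1}$ still has the integrable convolution kernel $k_-^\kk(x)=\ind_{(-\infty,0]}(x)e^{\kk x}$, with Fourier multiplier $(\kk-i\xi)^{-1}$, and $(\kk+\dx)^{-1}$ has kernel $k_+^\kk(x)=k_-^\kk(-x)$. Computing the trace of the leading operator as in the proof of Lemma~\ref{LEM:main1} (on $\R$ there is no periodization factor), the product of the two kernels collapses to $\big(k_-^\kk\big)^2=k_-^{2\kk}$, whence
\begin{align*}
\Re\tr\big\{(\kk-\dx)^{-1}u(\kk+\dx)^{-1}\cj u\big\}
=\Re\int_\R\frac{|\ft u(\xi)|^2}{2\kk-i\xi}\,d\xi
=\int_\R\frac{2(\Re\kk)\,|\ft u(\xi)|^2}{4(\Re\kk)^2+(\xi-2\Im\kk)^2}\,d\xi ,
\end{align*}
and the change of variables $\xi\mapsto\xi+2\Im\kk$ gives the first identity. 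For the Hilbert--Schmidt estimate I would compute the integral kernel of $B=(\kk-\dx)^{-\frac12}u(\kk+\dx)^{-\frac12}$ on the Fourier side exactly as in Lemma~\ref{LEM:error}, obtaining
\begin{align*}
\|B\|_{\If_2}^2=\frac{1}{2\pi}\iint_{\R^2}\frac{|\ft u(\xi-\eta)|^2}{|\kk-i\xi|\,|\kk+i\eta|}\,d\xi\,d\eta ,
\end{align*}
and then use $|\kk\mp i\zeta|=\big((\Re\kk)^2+(\zeta\mp\Im\kk)^2\big)^{1/2}$. Freezing the frequency $\zeta=\xi-\eta$ of $u$ and translating $\eta$ by $\Im\kk$, the inner $\eta$-integral becomes precisely the one estimated in \cite{KVZ} but centered at $\zeta-2\Im\kk$; the three-region splitting used for \eqref{error4} yields $\log\big(4+\tfrac{(\zeta-2\Im\kk)^2}{(\Re\kk)^2}\big)\big(4(\Re\kk)^2+(\zeta-2\Im\kk)^2\big)^{-1/2}$, and $\zeta\mapsto\zeta+2\Im\kk$ produces the stated bound.

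For the conservation statement I would rerun the proof of \cite[Propositions 4.3 and 4.4]{KVZ} (and of Lemma~\ref{LEM:KVZ3} in the mKdV--NLS case) verbatim with complex $\kk$. The key point is that the cancellation in $\tfrac{d}{dt}\al(\kk;u(t))$ is purely algebraic, coming from the Lax-pair structure and the resolvent identities for $(\kk\pm\dx)^{-1}$, none of which requires $\kk$ to be real---only $\Re\kk>0$, which keeps $\kk$ off the spectrum $i\R$ of $\pm\dx$. The smallness hypothesis, now read with $\Re\kk$ in place of $\kk$ and $\ft u$ replaced by its $2\Im\kk$-translate, is exactly what the Hilbert--Schmidt estimate above needs so that \eqref{X1x} renders the series \eqref{X1} absolutely convergent and legitimizes term-by-term differentiation in $t$. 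A conceptually clarifying shortcut, which also confirms the shift, is the exact identity $\al(a+ib;u)=\al\big(a;e^{-2ibx}u\big)$ for real $a>0$: the modulations carried by $e^{-2ibx}u$ and its conjugate can be absorbed into the resolvents, turning the $j$-th term of $\al\big(a;e^{-2ibx}u\big)$ into $\tr\big\{\big[(a-\dx)^{-1}u\,((a+2ib)+\dx)^{-1}\cj u\big]^j\big\}$, and a further conjugation by the unitary $e^{ibx}$ symmetrizes the resolvent parameters $(a,a+2ib)\mapsto(a+ib,a+ib)$, recovering the $j$-th term of $\al(a+ib;u)$. Through this identity the complex-$\kk$ conservation can alternatively be deduced from the real-$\kk$ result applied to the Galilean transform \eqref{Galilei} of $u$ (for NLS) or to $\Gf_{2b}(u)$ in \eqref{Galilei2}, which solves the mKdV--NLS equation \eqref{mKdV4} (for mKdV), using the translation- and phase-invariance of $\al(a;\cdot)$.

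The main obstacle I anticipate is the conservation step: one must verify that the algebraic manipulations in \cite{KVZ} genuinely never invoke reality of $\kk$, and that all of the trace-class and differentiation-under-the-sum arguments survive the passage to $\Re\kk>0$. Once this is secured, the two kernel identities are routine, since $\Re\kk>0$ is the only property of $\kk$ used in the convolution-kernel representation, and the frequency shift is produced mechanically by the change of variables $\xi\mapsto\xi+2\Im\kk$.
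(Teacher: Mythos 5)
Your proposal is correct. Note first that the paper itself gives no proof of Lemma~\ref{LEM:L1}: it is stated without proof inside Remark~\ref{REM:Rowan} as an alternative route suggested by R.~Killip, so there is nothing to compare against line by line; the implicit intention is exactly what you carry out, namely rerunning the real-$\kk$ kernel computations of \cite[Lemmas 4.1--4.2]{KVZ} (the real-line analogues of Lemmas~\ref{LEM:main1} and~\ref{LEM:error}) with $\Re\kk>0$. Your two identities check out: $\Re\,(2\kk-i\xi)^{-1}=2(\Re\kk)/(4(\Re\kk)^2+(\xi-2\Im\kk)^2)$ gives the trace formula after the shift $\xi\mapsto\xi+2\Im\kk$, and $|\kk\mp i\zeta|^2=(\Re\kk)^2+(\zeta\mp\Im\kk)^2$ reduces the Hilbert--Schmidt integral to the standard one centered at $\zeta-2\Im\kk$. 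The genuinely valuable addition is your exact identity $\al(a+ib;u)=\al\big(a;e^{-2ibx}u\big)$, obtained by commuting the modulation through the resolvents and conjugating by the unitary $e^{ibx}$: it makes the conservation statement an immediate corollary of the real-$\kk$ results of \cite{KVZ} (and of Lemma~\ref{LEM:KVZ3} for mKdV) applied to $\mathcal{G}_{2b}(u)$, resp.\ $\Gf_{2b}(u)$, using that $\al(a;\cdot)$ is insensitive to the constant phase $e^{4ib^2t}$ and the spatial translation by which $e^{-2ibx}u(t)$ differs from the transformed solution --- you correctly invoke this invariance at the end. This identity also explains, more sharply than the paper does, in what sense the complex-$\kk$ device ``replaces'' the Galilean symmetry: the two are unitarily equivalent. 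The only step you leave as a verification task --- that the algebraic cancellation in $\frac{d}{dt}\al(\kk;u(t))$ in \cite[Propositions 4.3--4.4]{KVZ} never uses reality of $\kk$ --- is rendered moot by your reduction, which is the cleaner way to close the argument.
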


Here, we stated the results only on the real line but similar statements hold on the circle.
Once we have Lemma \ref{LEM:L1}, 
we may use $\al(\frac 12 + i \frac{n}{2}; u)$ instead of $\al(\frac 12; u_n)$
in \eqref{Y1}, where $u_n$ is defined in \eqref{Y0b}.
In particular, this allows us to proceed 
and establish the global-in-time bounds \eqref{bd1} and \eqref{bd2}
without using the Galilean symmetry \eqref{Galilei}
for the cubic NLS~\eqref{NLS1}
and the Galilean transform \eqref{Galilei2}
for the mKdV \eqref{mKdV1}.

\end{remark}

\section{Controlling the modulation and Fourier-Lebesgue norms of higher regularities}
\label{SEC:B}

In this appendix, we briefly discuss how to derive 
the following global-in-time bounds
on the modulation and Fourier-Lebesgue norms of higher regularities.

\begin{theorem}\label{THM:3}
Let $2 \leq p < \infty$ and $0 \leq s < 1 - \frac 1p$. 

\begin{itemize}
\item[\textup{(i)}] 
There exists $C= C(p) >0$ such that 
\begin{align*}
\|u(t)\|_{M^{2, p}_s(\R)}\le C (1+\|u(0)\|_{M^{2, p}_s(\R)})^{\frac p2 - 1}\|u(0)\|_{M^{2, p}_s(\R)}
\end{align*}

\noi
for any Schwartz class  solution $u$ to the cubic NLS \eqref{NLS1} or the mKdV \eqref{mKdV1}
on $\R$
and any $t \in \R$.

\smallskip

\item[\textup{(ii)}] 
There exists $C= C(p) >0$ such that 
\begin{align*}
\|u(t)\|_{\FL^{s, p}(\T)}\le C \big(1+\|u(0)\|_{\FL^{s, p}(\T)}\big)^{\frac p2 - 1} \|u(0)\|_{\FL^{s, p}(\T)}
\end{align*}

\noi
for any smooth solution $u$ to the cubic NLS \eqref{NLS1} or the mKdV \eqref{mKdV1} on $\T$
and any $t \in \R$.

\end{itemize}

\end{theorem}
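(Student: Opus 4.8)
The plan is to follow the scheme already developed for the critical case (Theorem~\ref{THM:1} via Propositions~\ref{PROP:main1} and~\ref{PROP:main2}), but to track the extra weight $\jb{n}^{2s}$ coming from the higher-regularity norm. The central observation is that the $M^{2,p}_s$- and $\FL^{s,p}$-norms can again be expressed, up to equivalence, in terms of a weighted $\l^{p/2}$-sum of the leading terms of $\al(\frac12;u_n)$, where $u_n=\mathcal{G}_n(u)$ is the Galilean translate defined in~\eqref{Y0b} (respectively $u_{\ld,n}$ on the dilated torus). Concretely, from~\eqref{mod4}, \eqref{X1}, \eqref{X2}, and~\eqref{Y0a}, one has
\[
\|u(t)\|_{M^{2,p}_s}^2 \sim \Big\| \jb{n}^{2s}\!\int_\R \frac{|\ft u_n(\xi,t)|^2}{1+\xi^2}\,d\xi \Big\|_{\l^{p/2}_n},
\]
so the only change from the $s=0$ argument is the insertion of the weight $\jb{n}^{2s}$ into the $\l^{p/2}_n$-norm. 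First I would re-run the small-data argument of Proposition~\ref{PROP:main1}: the conservation of $\al(u_n)$ in the small-data regime is unchanged (it is a statement about a single Galilean frame for each fixed $n$, insensitive to the outer weight), so it suffices to control the weighted $\l^{p/2}_n$-norm of the error terms in~\eqref{Y1}.

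The key estimate to redo is~\eqref{Y3} with the weight $\jb{n}^{2s}$ present. There the error was bounded by a convolution of the frequency-localized masses against the kernel $(1+(k-n)^2)^{-(\frac12-\dl)}$, followed by Young's inequality in $\l^p_n$. With the weight, I would insert $\jb{n}^{2s}\les \jb{n-k}^{2s}\jb{k}^{2s}$ (using $s\ge0$), absorb the $\jb{n-k}^{2s}$ factor into the convolution kernel so that the kernel becomes $\jb{n-k}^{2s-1+2\dl}$, and absorb the $\jb{k}^{2s}$ factor into the localized masses so that they reassemble into the $M^{2,p}_s$-norm. Young's inequality then closes provided the modified kernel lies in the appropriate $\l^r_n$ space, i.e.\ provided $(2s-1+2\dl)\cdot\frac{p}{p-1}<-1$ for some small $\dl>0$; this is exactly the condition $s<1-\frac1p$. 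The continuity-in-time and bootstrap steps following~\eqref{Y3} then go through verbatim, giving the small-data bound $\|u(t)\|_{M^{2,p}_s}\les\|u(0)\|_{M^{2,p}_s}$.

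For the general case I would invoke the scaling relation~\eqref{scaling2}, which for the homogeneous weighted norm reads $\|f_\ld\|_{\dot M^{2,p}_s}\les\ld^{-s-1/p}\|f\|_{\dot M^{2,p}_s}$; choosing $\ld\sim(1+\|u(0)\|_{M^{2,p}_s})^{1/(s+1/p)}$ reduces matters to the small-data regime exactly as in~\eqref{Y4}--\eqref{Y6}, producing the stated exponent $\frac p2-1$ on $\R$ after undoing the scaling. The periodic statement~(ii) follows by the same passage to a dilated torus $\T_\ld$ as in Proposition~\ref{PROP:main2}, using the weighted analogues of~\eqref{Z3}--\eqref{Z4} together with Lemmas~\ref{LEM:error}, \ref{LEM:main1}, and~\ref{LEM:conserved2}, which are insensitive to the outer weight. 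The mKdV cases are identical after replacing the Galilean symmetry by the transform $\Gf_\be$ of~\eqref{Galilei2} and invoking Lemma~\ref{LEM:KVZ3}. The main obstacle, and the only place where the constraint $s<1-\frac1p$ is genuinely forced, is the weighted Young's inequality in the error estimate: the kernel decay $\jb{n-k}^{2s-1}$ degrades as $s$ grows, and summability in the dual exponent $\frac{p}{p-1}$ fails precisely at $s=1-\frac1p$, so all the care must go into verifying that a single small $\dl=\dl(s,p)>0$ can be chosen simultaneously meeting this summability requirement and the earlier requirement $(\frac12-\dl)\cdot\frac{p}{p-1}>\frac12$ used to reabsorb the unweighted part.
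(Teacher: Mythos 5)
Your overall architecture matches the paper's: the weighted analogue of the modulated Sobolev norm equivalence (Lemma \ref{LEM:equiv2}, with $\ta+s<-\frac12$), conservation of $\al(\tfrac12;u_n)$ in each Galilean frame, the scaling reduction to small data, the dilated-torus adaptation, and the passage to mKdV via $\Gf_\be$. The genuine gap is in the one step you yourself flag as the crux: the weighted error estimate. Your condition $(2s-1+2\dl)\cdot\frac{p}{p-1}<-1$ for the modified kernel $\jb{n-k}^{2s-1+2\dl}$ to lie in $\l^{p'}$ is \emph{not} equivalent to $s<1-\frac1p$; solving it gives $s<\frac{1}{2p}-\dl$. (Check $p=2$, $\dl=0$: the condition reads $2(2s-1)<-1$, i.e.\ $s<\frac14$, whereas the theorem asserts the range $s<\frac12$.) The uniform Peetre bound $\jb{n}^{2s}\les\jb{n-k}^{2s}\jb{k}^{2s}$ is too lossy: it degrades the convolution kernel by the full weight $\jb{n-k}^{2s}$ even in the region $|k|\gtrsim|n|$, where no loss is needed at all. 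Moreover, because the error term is the \emph{square} of the convolution, only a factor $\jb{n}^{s}$ (not $\jb{n}^{2s}$) needs to be distributed inside the $\l^p_n$-norm, while the localized masses must carry the full $\jb{k}^{2s}$ to reassemble the $M^{2,p}_s$-norm; your bookkeeping conflates these two exponents.

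The paper's Appendix \ref{SEC:B} handles exactly this point by splitting the $k$-sum into the regions $\jb{k}\ges\jb{n}$ and $\jb{k}\ll\jb{n}$. In the first region $\jb{n}^{s}\jb{k}^{-2s}\les 1$, so the kernel remains the unweighted $\jb{k-n}^{-1+2\dl}$ and plain Young's inequality works for every $s>0$ (estimate \eqref{CA3}). In the second region $\jb{n}^{s}\sim\jb{k-n}^{s}$, and the leftover factor $\jb{k}^{-2s}$ is not discarded but exploited through a combined Young--H\"older step with exponents $\frac1p+1=\frac1q+(\frac1r+\frac2p)$ (estimate \eqref{CA4}); it is the interplay of the three conditions (i)--(iii) there, with $\frac1r=1-\frac2p$ when $s>\frac12-\frac1p$, that produces the full range $s<1-\frac1p$. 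Without this extra H\"older gain from $\jb{k}^{-2s}$, your argument proves the theorem only for $s$ well below $\frac1p$, so the proposal as written does not establish the stated range. (A minor secondary point: your choice $\ld\sim(1+\|u(0)\|)^{1/(s+1/p)}$ would alter the final exponent; the paper keeps $\ld\sim(1+\|u(0)\|)^{p}$ so as to land on the stated power $\frac p2-1$.)
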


One may use a differencing technique as in Section 3 of \cite{KVZ}
to establish global-in-time bounds for higher values of $s$
but we do not pursue it in this paper.
It is worthwhile to note  that when $p = 2$, 
Theorem \ref{THM:3} yields global-in-time control on the
$H^s$-norm of a solution for $0 \leq s <  \frac  12$
{\it without} using a differencing technique.
Compare this with Section 4 of \cite{KVZ},
where their argument (without a differencing technique)
yields  global-in-time control for  $-\frac 12 \leq s < 0$.

In order to prove Theorem \ref{THM:3}, we first introduce  the following modulated Sobolev
space $\MH^{\ta, p}_s(\R)$  with a weight by the norm:
\begin{align*}
\|f\|_{\MH^{\ta, p}_s(\R)}
& = \bigg(\sum_{n \in \Z}\jb{n}^{s p}  \| M_n f\|_{H^\ta}^p \bigg)^\frac{1}{p}\notag\\
& = \bigg(\sum_{n \in \Z}\jb{n}^{s p}   \| \jb{\xi - n}^{\ta} \ft f(\xi)\|_{L^2_\xi}^p \bigg)^\frac{1}{p}, 
\end{align*}

\noi
where $M_n$ is  the modulation operator defined in  \eqref{mod4a}.
On the circle, we define $\MH^{\ta, p}_s(\T)$ in an analogous manner.
Then, we have the following equivalence of the norms.

\begin{lemma}\label{LEM:equiv2}
Let $s \geq 0$ and $\ta + s < -\frac 12$.
Then,  we have
\[ \| f \|_{\MH^{\ta, p}_s} \sim \|f\|_{M^{2, p}_s}\]

\noi
with the understanding that $M^{2, p}_s(\T) = \FL^{s, p}(\T)$ on the circle.

\end{lemma}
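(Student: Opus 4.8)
The plan is to adapt the proof of Lemma~\ref{LEM:equiv}(i) to the weighted setting, the only genuinely new ingredient being an elementary inequality that moves the $\jb{n}^s$-weight through the frequency-localization sum. I will treat only the real line; the circle follows by the same computation with $\Z$ in place of $\R$ and the integrals over $I_k$ replaced by the corresponding sums, using $M^{2,p}_s(\T)=\FL^{s,p}(\T)$ from \eqref{mod5}.

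For the lower bound $\|f\|_{M^{2,p}_s}\les\|f\|_{\MH^{\ta,p}_s}$, I would argue exactly as in the $s=0$ case: since $\ta<0$, one has $\psi_n(\xi)\les\jb{\xi-n}^{\ta}$ on the support of $\psi_n$, so $\|\psi_n(D)f\|_{L^2_x}\les\|M_n f\|_{H^\ta}$ pointwise in $n$; multiplying by the common weight $\jb{n}^s$ and taking the $\l^p_n$-norm gives the claim. This direction does not use the hypothesis $\ta+s<-\tfrac12$.

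The content is the reverse bound. Writing $a_k:=\int_{I_k}|\ft f(\xi)|^2\,d\xi$ with $I_k=[k-\tfrac12,k+\tfrac12)$ and using $\jb{\xi-n}\sim\jb{k-n}$ for $\xi\in I_k$, I obtain
\[
\|f\|_{\MH^{\ta,p}_s}^2\sim\Big\|\jb{n}^{2s}\sum_{k}\jb{k-n}^{2\ta}a_k\Big\|_{\l^{p/2}_n}.
\]
Here I would invoke the elementary weight inequality $\jb{n}^{2s}\les\jb{k}^{2s}\jb{n-k}^{2s}$, valid because $s\ge0$, to dominate the inner expression by the convolution $(\tilde b*\tilde a)_n$ with $\tilde b_m:=\jb{m}^{2s+2\ta}$ and $\tilde a_k:=\jb{k}^{2s}a_k$. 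Young's convolution inequality $\l^1*\l^{p/2}\to\l^{p/2}$, legitimate since $p\ge2$ forces $\tfrac p2\ge1$, then yields
\[
\|f\|_{\MH^{\ta,p}_s}^2\les\|\tilde b\|_{\l^1}\,\|\tilde a\|_{\l^{p/2}}=\Big(\sum_m\jb{m}^{2s+2\ta}\Big)\,\|\tilde a\|_{\l^{p/2}}\sim\Big(\sum_m\jb{m}^{2s+2\ta}\Big)\,\|f\|_{M^{2,p}_s}^2,
\]
and the $\l^1$-norm of $\tilde b$ is finite precisely when $2s+2\ta<-1$, i.e.\ $\ta+s<-\tfrac12$, which is exactly the hypothesis.

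I do not expect a real obstacle: the scheme mirrors Lemma~\ref{LEM:equiv}(i), and the only subtlety is that the weight $\jb{n}^s$ must be split across the convolution, which shifts the summability threshold for the kernel from $2\ta<-1$ to $2(\ta+s)<-1$. The step most worth double-checking is this splitting, since it is where the hypothesis $\ta+s<-\tfrac12$ (rather than $\ta<-\tfrac12$) enters and where the sign condition $s\ge0$ is used to guarantee $\jb{n}^{2s}\les\jb{k}^{2s}\jb{n-k}^{2s}$.
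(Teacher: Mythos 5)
Your proof is correct and is exactly the argument the paper intends: the paper omits the proof of Lemma~\ref{LEM:equiv2}, stating only that it is analogous to Lemma~\ref{LEM:equiv}, and your adaptation — reducing to the convolution bound via $\jb{\xi-n}\sim\jb{k-n}$ on $I_k$, splitting the weight with Peetre's inequality $\jb{n}^{2s}\les\jb{k}^{2s}\jb{n-k}^{2s}$ (using $s\ge 0$), and applying Young's inequality $\l^1*\l^{p/2}\to\l^{p/2}$ with the kernel summable precisely when $\ta+s<-\tfrac12$ — is the natural and correct way to carry that out.
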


The proof of Lemma \ref{LEM:equiv2} is analogous to that of Lemma \ref{LEM:equiv}
and thus we omit details.
In the following, we only consider the cubic NLS~\eqref{NLS1} on the real line case and indicate
where a modification appears in proving Theorem \ref{THM:3}\,(i).
Since the $s = 0$ case is already contained in Theorems~\ref{THM:1}
and~\ref{THM:2}, we restrict our attention to  $s > 0$.

In view of Lemma \ref{LEM:equiv2} with $\theta = -1$, 
we only need to control the following quantity:
\begin{align*}
 \bigg\| \jb{n}^{2s}\al( u_n(t)) 
& - \jb{n}^{2s} \int_\R \frac{ |\ft u_n(\xi, t))|^2}{1 + \xi^2 } d\xi\bigg\|_{\l^\frac{p}{2}_n}
\end{align*}

\noi
for $t \in I$, where $I$ is the time interval used in the proof of Proposition \ref{PROP:main1}.
Proceeding as in \eqref{Y3}, we have 
\begin{align}
 \bigg\| \jb{n}^{2s}\al( u_n(t)) 
& - \jb{n}^{2s} \int_\R \frac{ |\ft u_n(\xi, t))|^2}{1 + \xi^2 } d\xi\bigg\|_{\l^\frac{p}{2}_n}\notag\\
& \les 
\bigg\|\jb{n}^{s} \int_\R \frac{|\ft u(\xi, t)|^2}{\jb{\xi-n}^{1-2\dl}} d\xi\bigg\|_{\l^p_n}^2\notag\\
& \sim
\bigg\|\sum_{k \in \Z} 
 \frac{\jb{n}^{s}}{\jb{k-n}^{1-2\dl}\jb{k}^{2s}}
\cdot \jb{k}^{2s} \| \ft u(\xi, t)\|_{L^2_\xi(I_k)}^2\bigg\|_{\l^p_n}^2
\label{CA2}
\end{align}

\noi
for any $t \in I$.
When $\jb{k} \ges \jb{n}$, 
we apply Young's inequality as in \eqref{Y3} and obtain 
\begin{align}
\text{LHS of }\eqref{CA2}
& \les 
\bigg\|\sum_{k \in \Z} 
 \frac{1}{\jb{k-n}^{1-2\dl}}
\cdot \jb{k}^{2s} \| \ft u(\xi, t)\|_{L^2_\xi(I_k)}^2\bigg\|_{\l^p_n}^2\notag \\
& \les   \| u(t)\|_{M^{2, p}_s}^4
\label{CA3}
\end{align}

\noi
for $s > 0$, 
by choosing $\dl = \dl(p) > 0$ sufficiently small.
When $\jb{k} \ll \jb{n}$, it follows from Young's and H\"older's inequalities:
$\frac 1p + 1 = \frac 1q + (\frac 1r + \frac 2p)$
that 
\begin{align}
\text{LHS of }\eqref{CA2}
& \les 
\bigg\|\sum_{k \in \Z} 
 \frac{1}{\jb{k-n}^{1-2\dl - s}}
\cdot \frac{1}{\jb{k}^{2s}} \cdot \jb{k}^{2s} \| \ft u(\xi, t)\|_{L^2_\xi(I_k)}^2\bigg\|_{\l^p_n}^2\notag \\
& \le 
\bigg\| \frac{1}{\jb{n}^{1-2\dl - s}}\bigg\|_{\l^q_n}^2
\bigg\|\frac{1}{\jb{n}^{2s}}\bigg\|_{\l^r_n}^2
   \| u(t)\|_{M^{2, p}_s}^4\notag\\
& \les 
   \| u(t)\|_{M^{2, p}_s}^4, 
\label{CA4}
\end{align}

\noi
provided that (i) $1-2\dl - s > 1 - \frac 1p - \frac 1r$, 
(ii) $\frac 1r + \frac 2p \leq 1$,
and  (iii) $r > \frac 1{2s}$. 
When $s \le \frac{1}{2} - \frac 1p$, by  choosing $\frac{1}{r} = 2s - $
and $\dl > 0$ sufficiently small, 
we see that all the conditions (i) - (iii) are satisfied.
When $s > \frac{1}{2} - \frac 1p$,
by choosing $\frac 1r = 1 - \frac 2p$
and $\dl > 0$ sufficiently small, 
the conditions (i) - (iii) are satisfied 
for $2\leq p < \infty$ and $\frac 12 - \frac 1p < s < 1 - \frac{1}{p}$. 
Therefore, putting the two cases together, 
we conclude that the estimate \eqref{CA4} holds
for $2\leq p < \infty$ and $0 < s < 1 - \frac{1}{p}$. 

Once we have \eqref{CA3} and \eqref{CA4}, 
we can proceed as in the proof of Proposition~\ref{PROP:main1},
with Lemma~\ref{LEM:equiv2} in place of Lemma~\ref{LEM:equiv}.
The proof for the periodic case follows
in a similar manner.

\begin{ackno}\rm
T.O.~is 
 supported by the European Research Council (grant no.~637995 ``ProbDynDispEq'').
The authors would like to 
thank Rowan Killip for a helpful discussion, 
in particular on Remark \ref{REM:Rowan}.
They also thank Justin Forlano
for pointing out an error in \cite{APH}.

\end{ackno}

\end{document}